\documentclass[11pt,twoside,english]{article}
\usepackage{amsmath}
\usepackage{amsfonts}
\usepackage{mathrsfs}
\usepackage{color}
\usepackage{ amsmath, amsfonts, amssymb, amsthm, amscd}
\usepackage[T1]{fontenc}
\usepackage[english]{babel}
\usepackage[noinfoline]{imsart}



\setlength{\oddsidemargin}{5mm} \setlength{\evensidemargin}{5mm}
\setlength{\textwidth}{150mm} \setlength{\headheight}{0mm}
\setlength{\headsep}{12mm} \setlength{\topmargin}{0mm}
\setlength{\textheight}{220mm} \setcounter{secnumdepth}{2}

\newtheorem{theorem}{Theorem}
\newtheorem{lemma}{Lemma}
\newtheorem{proposition}{Proposition}

\newtheorem{definition}{Definition}

\newtheorem{remark}{Remark}

\newcommand{\cA}{\ensuremath{\mathcal A}}

\newcommand{\cD}{\ensuremath{\mathcal D}}
\newcommand{\cE}{\ensuremath{\mathcal E}}
\newcommand{\cF}{\ensuremath{\mathcal F}}

\newcommand{\cH}{\ensuremath{\mathcal H}}

\newcommand{\cO}{\ensuremath{\mathcal O}}

\newcommand{\cU}{\ensuremath{\mathcal U}}


\newcommand{\bbR}{{\ensuremath{\mathbb R}} }



\newcommand{\be}{\begin{equation}}
\newcommand{\ee}{\end{equation}}
\newcommand{\beq}{\begin{eqnarray}}
\newcommand{\eeq}{\end{eqnarray}}



\newcommand{\R}{\mathbb{R}}


\newcommand{\ced}{\end{proof}}

\setlength{\parindent}{0cm}

%

\begin{document}
\begin{frontmatter}
\title{The Obstacle Problem for Quasilinear Stochastic PDEs with Neumann boundary condition}
\date{}
\runtitle{}
\author{\fnms{Yuchao}
 \snm{DONG}\corref{}\ead[label=e1]{ycdong@fudan.edu.cn}}
\address{Fudan University
\\\printead{e1}}
\author{\fnms{Xue}
 \snm{YANG}\corref{}\ead[label=e2]{xyang2013@tju.edu.cn}}
\address{Tianjin University
\\\printead{e2}}
\author{\fnms{Jing}
 \snm{ZHANG}\corref{}\ead[label=e3]{zhang\_jing@fudan.edu.cn}}
\address{Fudan University
\\\printead{e3}}

\runauthor{Y. Dong, X. Yang and J. Zhang}

\begin{abstract}
We prove the existence and uniqueness of solution of the obstacle problem for quasilinear stochastic partial differential equations (OSPDEs for short) 
with Neumann boundary condition. Our method is based on the analytical technics coming from parabolic potential theory. The solution is expressed 
as a pair $(u,\nu)$ where $u$ is a predictable continuous process which takes values in a proper Sobolev space and $\nu$ is a random regular 
measure satisfying minimal Skohorod condition.
\end{abstract}

\begin{keyword}
\kwd{parabolic potential, regular measure, stochastic partial differential equations, Neumann boundary condition, obstacle
problem, penalization method, It\^o's formula, comparison theorem}
\end{keyword}
\begin{keyword}[class=AMS]
\kwd[Primary ]{60H15; 35R60; 31B150}
\end{keyword}

\end{frontmatter}

%
%
%
%
%
%
%
%
%
%
%
%
%

\section{Introduction}
Our aim is to prove the existence and uniqueness result for the following OSPDE with Neumann boundary condition that we write formally as:
\begin{equation}\label{SPDEO}\left\{ \begin{split}&du_t(x)=\partial_i  \left(a_{i,j}(x)\partial_ju_t(x)+g_i(t,x,u_t(x),\nabla u_t(x))\right)dt+f(t,x,u_t(x),\nabla u_t(x))dt \\&\quad \quad\ \  \ \ \ \ +\sum_{j=1}^{+\infty}h_j(t,x,u_t(x),\nabla u_t(x))dB^j_t, \quad\forall (t,x) \in  \mathbb R^+\times \mathcal O,\\
&\sum_{i,j} (a_{i,j}\partial_ju_t(x)+g_i(t,x,u_t(x),\nabla u_t(x)))n_i(x)=l(t,x,u_t(x)), \quad\forall (t,x) \in  \mathbb R^+\times \partial \mathcal O,\\
&u_t(x)\geq S_t(x),\quad \forall (t,x)\in \mathbb R^+\times \mathcal O, \\& u_0(x)=\xi(x),\quad\forall x\in\mathcal O, \end{split}\right.\end{equation}
where $\vec n(x)=(n_1(x),n_2(x),...,n_d(x))$ is the outer normal to $\cO\subset \bbR^d$ at $x\in \partial D$ and
we also use the notation $\frac{\partial u_t}{\partial v_L}:=\sum_{i,j} a_{i,j}\partial_ju_t(x)n_i(x).$
$a=(a_{i,j})_{1\leq i,j\leq d}$ is a symmetric  bounded measurable matrix which defines a second order operator on
$\mathcal{O}$, with Neumann boundary condition. The initial
condition is given as $u_0=\xi$, a $L^2(\mathcal{O})-$valued random
variable, and $f$, $g=(g_1,...,g_d)$ and $h=(h_1,...h_i,...)$ are
non-linear random functions. Given an obstacle $S: \Omega\times[0,T]\times \cO \rightarrow \R$, we study the OSPDE \eqref{SPDEO},
i.e. we want to find a solution that satisfies "$u\geq S$" where the obstacle $S$ is regular in some sense  and
controlled by the solution of a stochastic partial differential equation (SPDE for short).
\vspace{2mm}

The obstacle problems for SPDEs have been widely studied since Nualart and Pardoux   obtained the existence and uniqueness of solution for the obstacle problem with the diffusion matrix $a=I$ (\cite{NualartPardoux}). Then  Donati-Martin and Pardoux \cite{DonatiPardoux} proved it for the general diffusion matrix.  Various properties of  the solutions were studied later in \cite{DMZ}, \cite{XuZhang}, \cite{Zhang} etc..
As backward stochastic differential equations (BSDEs for short) were rapidly developed, obstacle problem associated with a nonlinear partial differential equation (PDE for short) with more general coefficients, and the properties of the solutions for OSPDEs were studied in the framework of BSDEs (see \cite{EKPPQ}, \cite{BCEF}, \cite{MatoussiStoica}, etc.).

\vspace{2mm}

This article is motivated by  a serial research of the obstacle problems (\cite{DMZ12}, \cite{MatoussiStoica}, etc.). The approach used in \cite{MatoussiStoica} is based on the probabilistic interpretation of the solution presented by backward doubly stochastic differential equations (BDSDEs for short).  In \cite{DMZ12}, the authors studied the OSPDE  with null Dirichlet condition on an open domain in $\mathbb{R}^d$. Their method was based on the technics of parabolic potential theory developed by M. Pierre (\cite{Pierre}, \cite{PIERRE}).  Inspired by this analytic method, we will continue the study on this obstacle problem but with Neumann boundary condition in this paper. The key point is to construct a solution which admits a quasi-continuous version defined outside a polar set and the regular measures which in general are not absolutely continuous w.r.t. the Lebesgue measure, do not charge polar sets. Due to the Neumann boundary condition is considered, the boundary behavior of the solution should be estimated, and thanks to trace theorem, this term can be controlled, with leads to the contraction condition on the coefficients should be strengthened in some sense.
\vspace{2mm}

The remainder of this paper is organized as follows: in the second section, we set the assumptions then we recall in the third section some useful results on SPDEs with Neumann boundary condition. In Section 4, we introduce the notion of regular measures associated to parabolic potentials. The quasi-continuity of the solution for SPDE without obstacle is proved in the fifth section. The last section is devoted to proving the existence and uniqueness result, and It\^o's formula and comparison theorem are also established.

\section{Preliminaries}
We consider a sequence $((B^i(t))_{t\geq0})_{i\in\mathbb{N}^*}$ of
independent Brownian motions defined on a standard filtered
probability space $(\Omega,\mathcal{F},(\mathcal{F}_t)_{t\geq0},P)$.

Let $\mathcal{O}\subset \bbR^d$ be a bounded open domain  with a smooth boundary and
$L^2(\mathcal{O})$ the set of square integrable functions with
respect to the Lebesgue measure on $\mathcal{O}$, which is an Hilbert
space equipped with the  scalar product and norm as follows
$$(u,v)=\int_\mathcal{O}u(x)v(x)dx,\qquad\| u\|=(\int_\mathcal{O}u^2(x)dx)^{1/2}.$$
We assume the boundary $\partial \cO$ of $\cO$ is smooth, and define a Hilbert space $L^2(\partial \cO)$ with the scalar product and norm as follows
$$(u,v)_{L^2(\partial \cO)}=\int_{\partial \cO}u(x)v(x)d\sigma(x),\qquad \|u\|_{L^2(\partial \cO)}=(\int_{\partial \cO}u^2(x)d\sigma(x))^{1/2},$$
where $d\sigma(x)$ is the Lebesgue measure in $\mathbb{R}^{d-1}$.

For the simplicity of the notations, we sometimes also denote $(\cdot,\cdot)_{L^2(\partial \cO)}$ and $\|\cdot\|_{L^2(\partial \cO)}$ by $(\cdot,\cdot)$ and $\|\cdot\|$. But it will be clear which space we are referring  to from the context.

We assume that
$a=(a_{i,j})_{1\leq i,j\leq d}$ is a smooth symmetric matrix defined on
$\bar {\mathcal{O}}$  satisfying the uniform ellipticity
condition$$\lambda|\xi|^2\leq\sum_{i,j=1}^d
a_{i,j}(x)\xi^i\xi^j\leq\Lambda|\xi|^2,\ \forall x\in\bar{\mathcal{O}},\
\xi\in \bbR^d,$$where $\lambda$ and $\Lambda$ are positive constants.

Consider the second order generator $(\cA, \cD(\cA))$ with null Neumann boundary condition, where
$$\cD(\cA)=\{u\in L^2(\cO)|\cA u\in L^2(\cO)\ \mbox{and} \ \sum_{i,j} a_{i,j}\partial_ju\,n_i=0 \text{ on } \partial \cO\}$$
and
$$\cA u=div(a_{i,j}\nabla u)=\sum_{i,j} \frac{\partial}{\partial x_i}(a_{i,j}\frac{\partial u}{\partial x_j})\,.$$
 Let $(F,\mathcal{E})$ be the associated Dirichlet form given by
$$F:=H^1(\mathcal{O}):=\{u\in L^2(\mathcal{O})| \nabla u\in L^2(\mathcal{O})\ \mbox{in weak sense}\}$$ and
$$\mathcal{E}(u,v):=\sum_{i,j=1}^{n}(a_{i,j}\partial_j u,\partial_j v)\ \mbox{and}\ \mathcal{E}(u)=\cE(u,u),\ \forall u,v\in F.$$
The  semigroup associated with $(\cA, \cD(\cA))$ on $L^2(\cO)$ is denoted by $\{P_t\}_{t\geq 0}$.

We have the following theorem (Theorem 7.26 in \cite{GT} and Theorem 5.22 in \cite{Ada}):
\begin{theorem}
Let $\cO$ be a smooth bounded domain in $\mathbb R^d$. Then
\begin{itemize}
\item[(1.)] {if $0 \le m <k-\frac{d}{2}<m+1$, the space $H^k(\cO)$ is continuously imbedded in $C^{m,\alpha}(\bar {\cO})$, with $\alpha=k-d/2-m$;}
\item[(2.)]{The trace operator $Tr$ admits a continuous extension from $H^1(\cO)$ to $L^2(\partial \cO)$.}
\end{itemize}
\end{theorem}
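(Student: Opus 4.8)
The plan is to treat the two statements separately, since they are the classical Sobolev embedding and trace theorems; both proceed by first reducing the geometry to a model domain, and both rely crucially on the smoothness of $\partial\cO$.

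For part (1), I would first invoke a bounded linear extension operator $E\colon H^k(\cO)\to H^k(\mathbb R^d)$, whose existence is guaranteed by the smoothness of the boundary; this lets me replace $\cO$ by all of $\mathbb R^d$ at the cost of a universal constant. On $\mathbb R^d$ I would argue through the Fourier transform. Writing $\hat u$ for the transform of $Eu$, the hypothesis $u\in H^k$ means $(1+|\xi|^2)^{k/2}\hat u\in L^2$. For any multi-index $\beta$ with $|\beta|\le m$ one has $\widehat{\partial^\beta u}(\xi)=(i\xi)^\beta\hat u(\xi)$, and by Cauchy--Schwarz
\[
\|(i\xi)^\beta\hat u\|_{L^1}\le\Big(\int_{\mathbb R^d}\frac{|\xi|^{2|\beta|}}{(1+|\xi|^2)^{k}}\,d\xi\Big)^{1/2}\|u\|_{H^k},
\]
and the displayed integral is finite exactly when $2k-2|\beta|>d$, i.e. when $|\beta|<k-d/2$; this holds for all $|\beta|\le m$ by the standing assumption $m<k-d/2$. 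Hence each $\partial^\beta u$ with $|\beta|\le m$ is the Fourier inversion of an $L^1$ function, so it is bounded and continuous on $\bar\cO$ with norm controlled by $\|u\|_{H^k}$.

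The remaining point for part (1), which I regard as the genuinely quantitative step, is the H\"older bound on the top derivatives $|\beta|=m$. Here I would estimate the difference $\partial^\beta u(x)-\partial^\beta u(y)$ by inserting $|e^{ix\cdot\xi}-e^{iy\cdot\xi}|\le\min(2,|x-y|\,|\xi|)$ into the inversion formula and splitting the frequency integral at $|\xi|=|x-y|^{-1}$; optimising the two resulting pieces yields the exponent $\alpha=k-d/2-m$, which is admissible precisely because $k-d/2-m\in(0,1)$. This produces the $C^{m,\alpha}(\bar\cO)$ estimate and completes the continuous embedding.

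For part (2), I would localise the boundary: cover $\partial\cO$ by finitely many coordinate charts in which $\partial\cO$ is flattened by smooth diffeomorphisms, subordinate a smooth partition of unity, and thereby reduce to proving the trace inequality on the half-space $\mathbb R^d_+=\{x_d>0\}$ with boundary $\{x_d=0\}$. For $u\in C^\infty_c(\overline{\mathbb R^d_+})$ the fundamental theorem of calculus gives $u(x',0)^2=-2\int_0^\infty u\,\partial_{x_d}u\,dx_d$; integrating over $x'\in\mathbb R^{d-1}$ and applying Young's inequality yields
\[
\|u(\cdot,0)\|_{L^2(\mathbb R^{d-1})}^2\le\|u\|_{L^2(\mathbb R^d_+)}^2+\|\partial_{x_d}u\|_{L^2(\mathbb R^d_+)}^2\le\|u\|_{H^1(\mathbb R^d_+)}^2.
\]
Transporting this back through the charts, where the smoothness of the change of variables keeps the Jacobian factors bounded, and summing over the partition of unity gives $\|Tr\,u\|_{L^2(\partial\cO)}\le C\|u\|_{H^1(\cO)}$ for smooth $u$. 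Since $C^\infty(\bar\cO)$ is dense in $H^1(\cO)$, the operator $Tr$ extends by continuity to all of $H^1(\cO)$, which is the assertion. The main obstacle throughout is not any single estimate but the careful use of boundary smoothness, in the extension operator of part (1) and in the flattening charts of part (2); once the problem is reduced to $\mathbb R^d$ or to the half-space, the estimates themselves are elementary.
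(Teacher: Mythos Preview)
Your proof is correct and follows standard textbook arguments for the Sobolev embedding and trace theorems. However, the paper does not actually prove this theorem: it is stated with the parenthetical attribution ``(Theorem 7.26 in \cite{GT} and Theorem 5.22 in \cite{Ada})'' and no proof is given. The authors simply import these classical results from Gilbarg--Trudinger and Adams, since they are needed only as tools (the continuity constant $\|Tr\|$ appears in the contraction condition of Assumption \textbf{(H)}, and the embedding into H\"older spaces is used in Section~\ref{linearspdeneumann} to obtain continuous versions of solutions). So your write-up is not a different approach to the paper's proof; it is a proof where the paper offers none, and what you have sketched is essentially the content of the cited references.
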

Note that for any $l \in L^2(\partial O)$, one can associate it with an element $\tilde l \in H^{-1}(\cO)$ (the dual space of $H^1(\cO)$) by $\tilde l(u)=\int_{\partial \cO} l(x)u(x)d\sigma(x)$, for any $u \in H^1(\cO)$. Due to the continuity of the trace operator, we have $\|\tilde l\|_{H^{-1}} \le \|Tr\|\|l\|_{L^2(\partial \cO)}$.

We assume that we have predictable random
functions\begin{eqnarray*}&&f:\bbR^+\times\Omega\times\mathcal{O}\times
\bbR\times \bbR^d\rightarrow
\bbR,\\&&g=(g_1,...,g_d):\bbR^+\times\Omega\times\mathcal{O}\times \bbR\times
\bbR^d\rightarrow
\bbR^d,\\&&h=(h_1,...,h_i,...):\bbR^+\times\Omega\times\mathcal{O}\times
\bbR\times \bbR^d\rightarrow \bbR^{\mathbb{N}^*},\\
&&l:\bbR^+\times\Omega\times\partial\mathcal{O}\times
\bbR\rightarrow
\bbR\,.
\end{eqnarray*}In the sequel,
$|\cdot|$ will always denote the underlying Euclidean or $l^2-$norm.

\textbf{Assumption (H):} There exist non-negative constants $C,\
\alpha,\ \beta,\ \theta$ such that for almost all $\omega$, the following
inequalities hold for all
$(x,y,z,t)\in\mathcal{O}\times\mathbb{R}\times\mathbb{R}^d\times\mathbb{R}^+$:\begin{enumerate}
\item   $|f(t,\omega,x,y,z)-f(t,\omega,x,y',z')|\leq C(|y-y'|+|z-z'|),$
\item $(\sum_{i=1}^d|g_i(t,\omega,x,y,z)-g_i(t,\omega,x,y',z')|^2)^{\frac{1}{2}}\leq
C|y-y'|+\alpha|z-z'|,$
\item $(|h(t,\omega,x,y,z)-h(t,\omega,x,y',z')|^2)^{\frac{1}{2}}\leq
C|y-y'|+\beta|z-z'|,$
\item $|l(t,\omega,x,y)-l(t,\omega,x,y')|\le \theta|y-y'|,$
\item the contraction property: $2\alpha+\beta^2+2\|Tr\|^2\theta<2\lambda.$
\end{enumerate}
\begin{remark} The last contraction property ensures existence and uniqueness for the solution of the SPDE without obstacle. (see \cite{DenisStoica})
\end{remark}

\textbf{Assumption (I):} Moreover we assume that for any
$T>0$,$$\xi\in L^2(\Omega\times\mathcal{O}) \text{ is an
$\mathcal{F}_0$-measurable random variable};$$
$$f(\cdot,\cdot,\cdot,0,0):=f^0\in
L^2([0,T]\times\Omega\times\mathcal{O};\bbR);$$
$$g(\cdot,\cdot,\cdot,0,0):=(g_1^0,...,g_d^0)\in
L^2([0,T]\times\Omega\times\mathcal{O};\bbR^d);$$
$$h(\cdot,\cdot,\cdot,0,0):=(h_1^0,...,h_i^0,...)\in
L^2([0,T]\times\Omega\times\mathcal{O};\bbR^{\mathbb{N}^*});$$
$$l(\cdot,\cdot,0):=l^0\in
L^2([0,T]\times\Omega\times\partial\mathcal{O};\bbR).$$

Now we introduce the notion of weak solution. For simplicity, we
fix the terminal time $T>0$. We denote by $\mathcal{H}_T$ the space
of $H^1(\mathcal{O})$-valued predictable processes
$(u_t)_{t\geq0}$ which satisfies
$$E\sup_{t\in[0,T]}\| u_t\|^2+E\int_0^T\mathcal{E}(u_t)dt<+\infty.$$
It is the natural space for solutions.

The space of test functions is denote by  $\mathcal{D}=\mathcal{C}%
_{c}^{\infty } (\R^+ )\otimes \mathcal{C}^2 (\bar{\cO} )$, where $\mathcal{C}%
_{c}^{\infty } (\R^+ )$ is the space of all real valued infinite
differentiable  functions with compact support in $\mathbb{R}^+$ and
$\mathcal{C}^2 (\bar{\cO} )$ the set
of functions in $C^2(\mathcal O)$ all of whose derivatives of order less than or equal to 2 have continuous extensions to $\bar {\cO}$.
\section{Linear SPDEs with Neumann boundary condition}\label{linearspdeneumann}
In this section, we recall some results concerning the linear SPDEs with Neumann boundary condition. First, consider the equation with regular coefficients and null Neumann boundary condition:
\begin{equation}\label{spde_zero_neumann_regular}
\left\{
\begin{aligned}
&du_t(x)=\big\{\cA u_t(x)+f(t,x)\big\}dt+h^i(t,x)dB^i_t,\quad\forall (t,x) \in \bbR^+\times \cO,\\
&\sum_{i,j}a_{i,j}(x)\partial_ju_t(x)n_i(x)=0,\quad\forall (t,x) \in \bbR^+\times \partial \cO,\\&u_0(x)=\xi,\quad\forall x \in \cO,
\end{aligned}
\right.
\end{equation}
with $f,h^i \in L^2([0,T]\times\Omega\times\mathcal{O};\bbR)$. According to Theorem 5.4 in \cite{DaPZ}, we know that (\ref{spde_zero_neumann_regular}) has a unique weak solution which can be written as:
$$u_t=P_t\xi+\int_0^tP_{t-s}f_sds+\int_0^tP_{t-s}h^i_sdB^i_s\,,\qquad a.s..$$
Note that in this case, $u$ belongs to $\cH_T$.

For more general case,
\begin{equation}\label{spde_linear_neumann}
\left \{
\begin{aligned}
&du_t(x)=\big\{\cA u_t(x)+{\partial}_i g^i(t,x)+f(t,x)\big\}dt+h^i(t,x)dB^i_t,\quad\forall (t,x) \in \bbR^+\times \cO,\\
&\sum_{i,j}(a_{i,j}(x)\partial_ju_t(x)+g^i_t(x))n_i(x)=l_t(x),\quad\forall (t,x) \in \bbR^+\times \partial \cO,\\&\ u_0(x)=\xi,\quad\forall x \in \cO,
\end{aligned}
\right.
\end{equation}
with $f,h^i,g^i \in L^2([0,T]\times\Omega\times\mathcal{O};\bbR)$ and $l \in L^2([0,T]\times\Omega\times\partial \mathcal{O};\bbR)$. By the linearity of the coefficients, it is easy to see that the weak solution $u$ can be decomposed into $u=u_1+u_2$, where $u_1$ is the solution of (\ref{spde_zero_neumann_regular}) and $u_2$ is the solution of PDEs without random coefficients:
\begin{equation}\label{spde_zero_neumann_deter}
\left \{
\begin{aligned}
&du_t(x)=\big \{\cA u_t(x)+{\partial}_i g^i(t,x)\big\}dt,\quad\forall (t,x) \in \bbR^+\times \cO,\\
&\sum_{i,j}(a_{i,j}(x)\partial_ju_t(x)+g^i_t(x))n_i(x)=l_t(x),\quad\forall (t,x) \in \bbR^+\times \partial \cO,\\&\ u_0(x)=\xi,\quad\forall x \in \cO.
\end{aligned}
\right.
\end{equation}
Now we assume that the coefficients $\xi$, $f$, $g$ and $h$ are smooth and $h=\sum_k h_k1_{[t_k,t_{k+1})}$ with $h_k$ being $\cF_{t_k}$-measurable and smooth in $x$. For the purpose of later discussion, we will show that the associated solution $u$ has a continuous version in $(t,x)$. First, let us recall some results of deterministic PDEs:
\begin{equation}\label{spde_zero_neumann_classical}
\left \{
\begin{aligned}
&du^2_t(x)=\big \{\cA u^2_t(x)+{\partial}_i g^i(t,x)\big\}dt,\quad\forall (t,x) \in \bbR^+\times \cO,\\
&\sum_{i,j}(a_{i,j}(x)\partial_ju^2_t(x)+g^i_t(x))n_i(x)=l_t(x),\quad\forall (t,x) \in \bbR^+\times \partial \cO,\\&u_0(x)=\xi,\quad\forall x \in \cO.
\end{aligned}
\right.
\end{equation}
According to Theorem 5.18 in \cite{Lieb}, if $\partial _i g^i\in C_{\alpha}$, $l \in C_{1+\alpha}$ and $\xi \in C_{2+\alpha}$, then the solution of (\ref{spde_zero_neumann_classical}) belongs to $C_{2+\alpha}$, where $C_{\alpha}$ is the H\"older space defined in Section 4.1 of \cite{Lieb}. This implies that the solution $u_2$ of (\ref{spde_zero_neumann_deter}) is continuous for almost all $\omega$. Due to the definition of $h^i$, it is easy to see that $P_{s}h^i_s(x)$  is continuous in space-time variables on any time interval $[t_k, t_{k+1})$. Hence, the It\^o integral $\int_0^t P_{t-s}h^i_s(x)dB^i_s$ is continuous in space-time variables and belongs to $C_{2+\alpha}$ for any $t$. Moreover, if $f\in C_\alpha$, the solution $u_1$ of (\ref{spde_zero_neumann_regular}) belongs to $C_{2+\alpha}$, $P$-a.s.. Thus, we can deduce that $u$ has a continuous version with respect to $(t,x)$.

\vspace{2mm}
At last,  It\^o's formula for the solution of \eqref{spde_linear_neumann} is proved.
\begin{lemma}\label{Ito}
Let $u$ be the solution of \eqref{spde_linear_neumann} and $\Phi:\mathbb{R}^+\times\mathbb{R}\rightarrow\mathbb{R}$ be a
function of class $\mathcal{C}^{1,2}$. We denote by $\Phi'$ and
$\Phi''$ the derivatives of $\Phi$ with respect to the space
variables and by $\frac{\partial\Phi}{\partial t}$ the partial
derivative with respect to time. We assume that these derivatives
are bounded. Let $u$ be a weak solution of (\ref{spde_linear_neumann}). Then, $P$-a.s. for all $t \in[0,T]$,
\begin{eqnarray*}
&&\hspace{-0.5cm}\int_\mathcal{O}\Phi(t,u_t(x))dx+\int_0^t\mathcal{E}(\Phi'(s,u_s),u_s)ds=\int_\mathcal{O}\Phi(0,\xi(x))dx+\int_0^t\int_\mathcal{O}\frac{\partial\Phi}{\partial
s}(s,u_s(x))dxds\\&&\hspace{-0.5cm}+\int_0^t(\Phi'(s,u_s),f_s)ds
-\sum_{i=1}^d\int_0^t\int_\mathcal{O}\Phi''(s,u_s(x))\partial_iu_s(x)g_s^i(x)dxds+\sum_{j=1}^{+\infty}\int_0^t(\Phi'(s,u_s),h_j)dB_s^j\\&&
\hspace{-0.5cm}+\int_0^t\int_{\partial \cO}\Phi(s,u_s(x))l_s(x)d\sigma(x)ds+\frac{1}{2}\sum_{j=1}^{+\infty}\int_0^t\int_\mathcal{O}\Phi''(s,u_s(x))(h_s^j(x))^2dxds.
\end{eqnarray*}
\end{lemma}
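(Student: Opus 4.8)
The plan is to establish the identity first under the smoothness hypotheses made just above the lemma, where $u$ has a version that is $C_{2+\alpha}$ (hence continuous) in $(t,x)$, and then to remove those hypotheses by approximation. In the smooth case, for each fixed $x\in\bar{\cO}$ the path $t\mapsto u_t(x)$ is a genuine real-valued It\^o process whose drift is $\cA u_t(x)+\partial_i g^i(t,x)+f(t,x)$, whose martingale part has coefficients $h^i(t,x)$, and whose quadratic variation is $\sum_j (h^j_s(x))^2\,ds$. First I would apply the classical one-dimensional It\^o formula to $\Phi(t,u_t(x))$ for each such $x$; this produces, pointwise in $x$, the time-derivative term $\partial_s\Phi(s,u_s(x))$, the drift term $\Phi'(s,u_s(x))[\cA u_s(x)+\partial_i g^i_s(x)+f_s(x)]$, the martingale term $\Phi'(s,u_s(x))h^j_s(x)$, and the It\^o correction $\tfrac12\Phi''(s,u_s(x))\sum_j (h^j_s(x))^2$.

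Next I would integrate this identity over $\cO$, using Fubini for the Lebesgue integrals and a stochastic Fubini theorem for the martingale term (legitimate because $\Phi'$ is bounded and $h\in L^2$). The core of the computation is the two spatial integrals coming from the second-order operator and from $\partial_i g^i$. Applying Green's formula to $\int_\cO \Phi'(s,u_s)\,\cA u_s\,dx$ and using $\nabla[\Phi'(s,u_s)]=\Phi''(s,u_s)\nabla u_s$ turns it into $-\cE(\Phi'(s,u_s),u_s)$ plus the boundary integral $\int_{\partial\cO}\Phi'(s,u_s)\,(a\nabla u_s)\cdot\vec n\,d\sigma$; integrating $\int_\cO \Phi'(s,u_s)\,\partial_i g^i_s\,dx$ by parts turns it into $-\sum_i\int_\cO \Phi''(s,u_s)\,\partial_i u_s\,g^i_s\,dx$ plus $\int_{\partial\cO}\Phi'(s,u_s)\,g^i_s n_i\,d\sigma$. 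Adding the two surface integrals and invoking the Neumann boundary condition $\sum_{i,j}(a_{i,j}\partial_j u_s+g^i_s)n_i=l_s$ collapses them into the single boundary term $\int_{\partial\cO}\Phi'(s,u_s)\,l_s\,d\sigma$, which is the Neumann contribution recorded in the statement. Collecting everything gives the formula in the smooth case.

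To reach general $L^2$ data I would approximate $\xi,f,g,h,l$ by smooth data $\xi^n,f^n,g^n,h^n,l^n$ (with $h^n$ taken piecewise constant in time, as above) converging in the respective $L^2$ spaces, let $u^n$ solve the corresponding equation, and apply the smooth-case identity to each $u^n$. By the stability of the linear solution map one has $u^n\to u$ in $\cH_T$, so in particular $\sup_{t}\|u^n_t-u_t\|\to0$ and $\nabla u^n\to\nabla u$ in $L^2([0,T]\times\Omega\times\cO)$; the trace theorem (part (2) of Theorem 1, via $\|\tilde l\|_{H^{-1}}\le\|Tr\|\,\|l\|_{L^2(\partial\cO)}$) controls the boundary contribution. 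Using the assumed boundedness of $\Phi,\Phi',\Phi''$ together with these convergences I would pass to the limit term by term: dominated convergence for the deterministic integrals and the It\^o isometry for the stochastic integral.

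I expect the main difficulty to lie in passing to the limit in the two quadratic-type terms $\int_0^t\cE(\Phi'(s,u^n_s),u^n_s)\,ds$ and $\tfrac12\sum_j\int_0^t\int_\cO\Phi''(s,u^n_s)(h^{n,j}_s)^2\,dx\,ds$, since these are nonlinear in the data and demand strong $L^2$ convergence of the gradients $\nabla u^n$ and of $h^n$ rather than mere weak convergence. Extracting this strong convergence from the $\cH_T$-stability of the solution map, while simultaneously keeping the boundary trace term under control through the continuity of $Tr$, is the delicate point; the infinite sum over $j$ also requires uniform integrability of $\sum_j (h^{n,j}_s)^2$, which follows from the $l^2$-convergence of $h^n$ to $h$.
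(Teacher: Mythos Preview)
Your argument is correct, but it follows a different route from the paper's proof. The paper proceeds by a direct time-discretization: it partitions $[0,t]$, expands $\int_\cO\Phi(t_k,u_{t_k})-\Phi(t_{k-1},u_{t_{k-1}})\,dx$ by Taylor's formula, and handles the first-order increment $\int_\cO\Phi'(t_{k-1},u_{t_{k-1}})(u_{t_k}-u_{t_{k-1}})\,dx$ by inserting $\Phi'(t_{k-1},u_{t_{k-1}})\in H^1(\cO)$ as a test function in the weak formulation of \eqref{spde_linear_neumann}; the boundary term with $l$ then appears automatically from the weak form, and one passes to the limit as the mesh size $\Delta\to0$ (the paper only sketches this, citing \cite{DenisStoica} and \cite{DaPZ}). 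Your approach instead first exploits the $C_{2+\alpha}$ regularity available under the smoothness hypotheses of Section~\ref{linearspdeneumann}: apply the one-dimensional It\^o formula pointwise in $x$, integrate over $\cO$ via stochastic Fubini, and recover the energy and boundary terms through Green's formula plus the Neumann condition; then pass from smooth to general data by the $\cH_T$-stability of the linear solution map. The paper's route is shorter because it never leaves the weak setting and needs no approximation in the data, but it hides the analytic work in the $\Delta\to0$ limit (in particular the second-order Taylor remainder and the quadratic-variation term require care that the sketch omits). Your route makes every integration by parts explicit and cleanly isolates the Neumann contribution, at the cost of a two-stage argument and the limit passage you correctly flag as the delicate step; the strong $\cH_T$ convergence you need is indeed available from the linear estimate \eqref{estimate:u}, so the nonlinear terms do converge. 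Incidentally, your boundary term $\int_{\partial\cO}\Phi'(s,u_s)l_s\,d\sigma$ is what the computation actually produces; the $\Phi$ (rather than $\Phi'$) in the displayed statement is a typographical slip in the paper.
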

\begin{proof}
The proof is similar to Lemma 7 in \cite{DenisStoica} and Theorem 4.17 in \cite{DaPZ}. Thus we only give a sketch of it. Let $0=t_0<t_1<t_2<...<t_{n-1}<t_n=t$ be a partition of time interval $[0,t]$ and $\Delta:=\max_{i=1,2,...,n}|t_k-t_{k-1}|$. Then, $P-a.s.$,
\begin{equation*}
\begin{split}
&\int_\mathcal{O}\Phi(t,u_t(x))dx-\int_\mathcal{O}\Phi(0,\xi(x))dx
=\sum_{k=1}^{n}\Big( \int_\mathcal{O}\Phi(t_k,u_{t_k}(x))dx-\int_\mathcal{O}\Phi(t_{k-1},u_{t_{k-1}}(x))dx\Big)\\
&=\sum_{k=1}^{n}\Big\{\int_\mathcal{O}\frac{\partial \Phi}{\partial s}(t_{k-1},u_{t_{k-1}}(x))(t_k-t_{k-1})dx+\int_\mathcal{O}\Phi'(t_{k-1},u_{t_{k-1}}(x))(u_{t_k}(x)-u_{t_{k-1}}(x))dx\\
&+\int_\mathcal{O}\frac{1}{2}\Phi''(t_{k-1},u_{t_{k-1}}(x))(u_{t_k}(x)-u_{t_{k-1}}(x))^2dx\Big \}+o(\Delta).\\
\end{split}
\end{equation*}
Since $u$ is a weak solution of \eqref{spde_linear_neumann}, we have
\begin{equation*}
\begin{split}
&\int_\mathcal{O}\Phi'(t_{k-1},u_{t_{k-1}}(x))(u_{t_k}(x)-u_{t_{k-1}}(x))dx=-\int_{t_{k-1}}^{t_k}\mathcal{E}(\Phi'(t_{k-1},u_{t_{k-1}}),u_s)ds\\&+\int_{t_{k-1}}^{t_k}(\Phi'(t_{k-1},u_{t_{k-1}}),f_s)ds-\sum_{i=1}^d\int_{t_{k-1}}^{t_k}\int_\mathcal{O}\Phi''(t_{k-1},u_{t_{k-1}}(x))\partial_iu_{{t_{k-1}}}(x)g_s^i(x)dxds\\&+\sum_{j=1}^{+\infty}\int_0^t(\Phi'(t_{k-1},u_{t_{k-1}}),h_s^j)dB_s^j+\int_{t_{k-1}}^{t_k}\int_{\partial \cO}\Phi(t_{k-1},u_{t_{k-1}}(x))l_s(x)d\sigma(x)ds.
\end{split}
\end{equation*}
Thus, letting $\Delta \rightarrow +\infty$, we have, $P-a.s.$,
\begin{equation*}\sum_{k=1}^{n}\int_\mathcal{O}\frac{\partial \Phi}{\partial s}(t_{k-1},u_{t_{k-1}}(x))(t_k-t_{k-1})dx \rightarrow \int_0^t\int_\mathcal{O}\frac{\partial\Phi}{\partial s}(s,u_s(x))dxds,\end{equation*}
\begin{equation*}
\begin{split}
&\sum_{k=1}^{n}\Phi'(t_{k-1},u_{t_{k-1}}(x))(u_{t_k}(x)-u_{t_{k-1}}(x))dx\\
\rightarrow &-\int_0^t\mathcal{E}(\Phi'(s,u_s),u_s)ds+\int_0^t(\Phi'(s,u_s),f_s)ds
-\sum_{i=1}^d\int_0^t\int_\mathcal{O}\Phi''(s,u_s(x))\partial_iu_s(x)g_s^i(x)dxds\\&
+\sum_{j=1}^{+\infty}\int_0^t(\Phi'(s,u_s),h_s^j)dB_s^j+\int_0^t\int_{\partial \cO}\Phi(s,u_s(x))l_s(x)d\sigma(x)ds,
\end{split}
\end{equation*}
and
\begin{equation*}\sum_{k=1}^{n}\int_\mathcal{O}\frac{1}{2}\Phi''(t_{k-1},u_{t_{k-1}}(x))(u_{t_k}(x)-u_{t_{k-1}}(x))^2dx \rightarrow \frac{1}{2}\sum_{j=1}^{+\infty}\int_0^t\int_\mathcal{O}\Phi''(s,u_s(x))(h_s^j(x))^2dxds.\end{equation*}
Combine the three equalities above, we get the desired relation.
\end{proof}
\begin{remark}
As we have the It\^o formula, the following estimate for the solution of \eqref{spde_linear_neumann} can be easily deduced:
\begin{equation}\label{estimate:u}E\sup_t \|u_t\|^2+E\int_0^T\mathcal{E}(u_t)dt\leq CE\left(\|\xi\|^2+\int_0^T\|f_t\|^2+\||g_t|\|^2+\||h_t|\|^2+\|l_t\|^2dt\right),\end{equation}
where the constant $C$ only depends on the constants in Assumption (\begin{bf}H\end{bf}), $T$ and $d$. 
\end{remark}
\section{Parabolic potential analysis}{\label{capacity}}
\subsection{Parabolic capacity and potentials}
In this section we will recall some important definitions and
results concerning the obstacle problem for parabolic PDE in
\cite{Pierre} and \cite{PIERRE}.

$\mathcal{K}$ denotes $L^\infty(0,T;L^2(\mathcal{O}))\cap
L^2(0,T;H^1(\mathcal{O}))$ equipped with the norm:
\begin{eqnarray*}\|
v\|^2_\mathcal{K}&=&\|
v\|^2_{L^\infty(0,T;L^2(\mathcal{O}))}+\|
v\|^2_{L^2(0,T;H^1(\mathcal{O}))}\\
&=&\sup_{t\in[0,T[}\| v_t\|^2 +\int_0^T \left(
\| v_t \|^2  +\mathcal{E}(v_t)\right)\, dt
.\end{eqnarray*} $\mathcal{C}$ denotes the space of continuous
functions on compact support in $[0,T[\times\bar{\mathcal{O}}$ and
finally:
$$\mathcal {W}=\{\varphi\in L^2(0,T;H^1(\mathcal{O}));\ \frac{\partial\varphi}{\partial t}\in
L^2(0,T;H^{-1}(\mathcal{O}))\}, $$ endowed with the
norm$\|\varphi\|^2_{\mathcal {W}}=\|
\varphi\|^2_{L^2(0,T;H^1(\mathcal{O}))}+\|\displaystyle\frac{\partial
\varphi}{\partial t}\|^2_{L^2(0,T;H^{-1}(\mathcal{O}))}$.

It is known (see \cite{LionsMagenes}) that $\mathcal{W}$ is
continuously embedded in $C([0,T]; L^2 (\cO))$, the set of $L^2 (\cO
)$-valued continuous functions on $[0,T]$. Without ambiguity, we
will also consider
$\mathcal{W}_T=\{\varphi\in\mathcal{W};\varphi(T)=0\}$,
$\mathcal{W}^+=\{\varphi\in\mathcal{W};\varphi\geq0\}$,
$\mathcal{W}_T^+=\mathcal{W}_T\cap\mathcal{W}^+$.

We now introduce the notion of parabolic potentials and regular measures which permit to define the parabolic capacity.
\begin{definition}
An element $v\in \mathcal{K}$ is said to be a {\bf parabolic potential} if it satisfies:
$$ \forall\varphi\in\mathcal{W}_T^+,\
\int_0^T-(\frac{\partial\varphi_t}{\partial
t},v_t)dt+\int_0^T\mathcal{E}(\varphi_t,v_t)dt\geq0.$$
We denote by $\mathcal{P}$ the set of all parabolic potentials.
\end{definition}
The next representation property is  crucial:
\begin{proposition}(Proposition 1.1 in \cite{PIERRE})\label{presentation}
Let $v\in\mathcal{P}$, then there exists a unique positive Radon
measure on $[0,T[\times\bar{\mathcal{O}}$, denoted by $\nu^v$, such that:
$$\forall\varphi\in\mathcal{W}_T\cap\mathcal{C},\ \int_0^T(-\frac{\partial\varphi_t}{\partial t},v_t)dt+\int_0^T\mathcal{E}(\varphi_t,v_t)dt=\int_0^T\int_{\bar{\mathcal{O}}}\varphi(t,x)d\nu^v.$$
Moreover, $v$ admits a right-continuous (resp. left-continuous)
version $\hat{v} \ (\makebox{resp. } \bar{v}): [0,T]\mapsto L^2
(\cO)$ .\\
Such a Radon measure, $\nu^v$ is called {\bf a regular measure} and we write:
$$ \nu^v =\frac{\partial v}{\partial t}-\cA v .$$
\end{proposition}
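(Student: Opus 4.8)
The plan is to recognize the claimed identity as a Riesz--Markov representation of a positive linear functional, and to construct $\nu^v$ by the standard passage from positivity to local boundedness. First I would fix $v\in\mathcal{P}$ and define, for $\varphi\in\mathcal{W}_T$,
$$L(\varphi):=\int_0^T-\left(\frac{\partial\varphi_t}{\partial t},v_t\right)dt+\int_0^T\mathcal{E}(\varphi_t,v_t)\,dt.$$
Since $v\in\mathcal{K}=L^\infty(0,T;L^2(\cO))\cap L^2(0,T;H^1(\cO))$, the first integrand is controlled by $\|\partial_t\varphi\|_{L^2(H^{-1})}\|v\|_{L^2(H^1)}$ and the second by $\Lambda\|\varphi\|_{L^2(H^1)}\|v\|_{L^2(H^1)}$, so $L$ is a continuous linear form on $\mathcal{W}_T$. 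The defining property of a potential is exactly $L(\varphi)\geq0$ for all $\varphi\in\mathcal{W}_T^+$, i.e. $L$ is positive, and the target identity asserts that the restriction of $L$ to $\mathcal{W}_T\cap\mathcal{C}$ is integration against a positive Radon measure on $[0,T[\times\bar{\cO}$.

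Second, I would turn positivity into a local sup-norm estimate. Given a compact $K\subset[0,T[\times\bar{\cO}$, pick $T'<T$ with $K\subset[0,T']\times\bar{\cO}$ and a cutoff $\psi(t,x)=\eta(t)$, where $\eta\in\mathcal{C}_c^\infty([0,T[)$, $0\leq\eta\leq1$, $\eta\equiv1$ on $[0,T']$; then $\psi\in\mathcal{W}_T^+$, and since $\psi$ has no spatial dependence $\mathcal{E}(\psi,v)=0$, so $L(\psi)=-\int_0^T\eta'(t)\left(\int_{\cO}v_t\,dx\right)dt=:C_{T'}$ is a finite nonnegative constant. For any $\varphi\in\mathcal{W}_T\cap\mathcal{C}$ with support in $K$ the functions $\|\varphi\|_\infty\psi\pm\varphi$ lie in $\mathcal{W}_T^+$, whence $|L(\varphi)|\leq C_{T'}\|\varphi\|_\infty$. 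As the smooth functions $\mathcal{C}_c^\infty([0,T[)\otimes\mathcal{C}^2(\bar{\cO})$ belong to $\mathcal{W}_T\cap\mathcal{C}$ and are dense in $\mathcal{C}=C_c([0,T[\times\bar{\cO})$ for the uniform norm, this estimate extends $L$ to a positive linear functional on all of $\mathcal{C}$. The Riesz--Markov theorem on the locally compact space $[0,T[\times\bar{\cO}$ then yields a unique positive Radon measure $\nu^v$ with $L(\varphi)=\int\varphi\,d\nu^v$; local finiteness holds because $C_{T'}<\infty$ for each $T'<T$ (it may blow up as $T'\uparrow T$, which is precisely why the domain is half-open). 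To recover the identity for \emph{every} $\varphi\in\mathcal{W}_T\cap\mathcal{C}$ I approximate such a $\varphi$ by smooth $\varphi_n$ converging to it simultaneously in $\|\cdot\|_{\mathcal{W}_T}$ and in $\|\cdot\|_\infty$; then $L(\varphi_n)\to L(\varphi)$ by $\mathcal{W}_T$-continuity while $\int\varphi_n\,d\nu^v\to\int\varphi\,d\nu^v$ by uniform convergence on a common compact support. Uniqueness is part of Riesz--Markov, and the notation $\nu^v=\frac{\partial v}{\partial t}-\cA v$ is just the weak reading of the identity, the Neumann condition being encoded in $\mathcal{E}$.

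Third, for the one-sided continuous versions I would test the representation against purely spatial functions. For fixed $\varphi\in\mathcal{C}^2(\bar{\cO})$ the integrated-in-time form reads
$$(v_t,\varphi)=(v_0,\varphi)-\int_0^t\mathcal{E}(\varphi,v_s)\,ds+\int_{[0,t]\times\bar{\cO}}\varphi\,d\nu^v$$
for a.e. $t$. The first integral is continuous in $t$ and the second is of bounded variation (monotone when $\varphi\geq0$), so $t\mapsto(v_t,\varphi)$ has one-sided limits at every $t$. Because $v\in L^\infty(0,T;L^2(\cO))$ the slices $\{v_t\}$ are bounded in $L^2(\cO)$, and one-sided limits against the dense family $\mathcal{C}^2(\bar{\cO})$ therefore determine weak $L^2(\cO)$ one-sided limits, defining the right-continuous version $\hat v$ and the left-continuous version $\bar v$. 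The main obstacle I anticipate is twofold: carrying out the simultaneous two-norm approximation needed to push the representation identity from smooth functions to all of $\mathcal{W}_T\cap\mathcal{C}$, and upgrading the weak one-sided limits to genuine $L^2(\cO)$-limits using the monotone (energy) structure coming from the positivity of $\nu^v$; by contrast, the positivity-to-measure construction is the robust Riesz--Markov core.
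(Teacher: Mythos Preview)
The paper does not supply a proof of this proposition: it is quoted directly from \cite{PIERRE}, Proposition~1.1, as background material in Section~\ref{capacity}, so there is no in-paper argument to compare against. Your sketch is nonetheless the standard one and matches Pierre's original strategy: positivity of the functional $L$ on $\mathcal{W}_T^+$ together with a purely temporal cutoff $\psi=\eta(t)$ dominating any compactly supported test function gives a locally sup-norm-bounded positive linear functional on $\mathcal{C}$, whence Riesz--Markov produces $\nu^v$; the one-sided $L^2(\cO)$ versions then come from the bounded-variation-in-$t$ structure of $t\mapsto\nu^v([0,t]\times\bar{\cO})$ combined with the energy form. The two technical points you flag as obstacles---the simultaneous $\mathcal{W}_T$/uniform approximation and the upgrade from weak to strong one-sided limits---are real but routine, and are precisely what Pierre works out in detail (the latter via an energy identity showing $t\mapsto\|\hat v_t\|^2$ is itself one-sidedly continuous).
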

\begin{remark} As a consequence, we can also define for all $v\in\mathcal{P}$:
$$ v_T =\lim_{t\uparrow T}\bar{v}_t \ \in L^2 (\cO ).$$
\end{remark}

\begin{definition}
Let $K\subset [0,T[\times\bar{\mathcal{O}}$ be compact, $v\in\mathcal{P}$
is said to be  \textit{$\nu-$superior} than 1 on $K$, if there
exists a sequence $v_n\in\mathcal{P}$ with $v_n\geq1\ a.e.$ on a
neighborhood of $K$ converging to $v$ in
$L^2(0,T;H^1(\mathcal{O}))$.
\end{definition}
We denote:$$\mathscr{S}_K=\{v\in\mathcal{P};\ v\ is\ \nu-superior\
to\ 1\ on\ K\}.$$
\begin{proposition}(Proposition 2.1 in \cite{PIERRE})
Let $K\subset [0,T[\times\bar{\mathcal{O}}$ compact, then $\mathscr{S}_K$
admits a smallest $v_K\in\mathcal{P}$ and the measure $\nu^v_K$
whose support is in $K$ satisfies
$$\int_0^T\int_{\bar{\mathcal{O}}}d\nu^v_K=\inf_{v\in\mathcal{P}}\{\int_0^T\int_{\bar{\mathcal{O}}}d\nu^v;\ v\in\mathscr{S}_K\}.$$
\end{proposition}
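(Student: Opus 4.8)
The plan is to realize $v_K$ as the order-theoretic infimum of $\mathscr{S}_K$, produced as the limit of a decreasing minimizing sequence, and then to read off the two asserted properties of $\nu^{v_K}$ from the minimality of $v_K$. The structural facts about $\mathcal{P}$ that I would use are: (i) $\mathcal{P}$ is a convex cone; (ii) $\mathcal{P}$ is stable under the pointwise minimum $\wedge$; (iii) every $\mathcal{K}$-bounded decreasing sequence in $\mathcal{P}$ converges strongly in $L^2(0,T;H^1(\mathcal{O}))$ to an element of $\mathcal{P}$, with the associated measures converging. Property (i) is immediate from the linearity of the variational inequality in the definition of $\mathcal{P}$; property (ii) is the statement that the minimum of two supercaloric functions is supercaloric, which I would prove by testing the inequality for $v_1\wedge v_2$ against $\varphi\in\mathcal{W}_T^+$ after a standard regularization of the map $\wedge$ (writing $v_1\wedge v_2=v_1-(v_1-v_2)^+$ and using continuity of $u\mapsto u^+$ on $H^1(\mathcal{O})$). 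Property (iii) is the technical heart and I return to it below.

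First I would check $\mathscr{S}_K\neq\emptyset$: choosing a smooth $\psi\in\mathcal{W}^+$ with $\psi\ge 1$ on a neighbourhood of the compact $K$ and solving the associated (obstacle, or merely heat) problem produces a bounded potential dominating $\psi$ near $K$, hence lying in $\mathscr{S}_K$. Next, combining (ii) with the definition of $\nu$-superiority shows $\mathscr{S}_K$ is stable under $\wedge$: if $v^1_n\ge1$ and $v^2_n\ge1$ near $K$ approximate $v_1,v_2$ in $L^2(0,T;H^1(\mathcal{O}))$, then $v^1_n\wedge v^2_n\in\mathcal{P}$ still dominates $1$ near $K$ and converges to $v_1\wedge v_2$, so $\mathscr{S}_K$ is downward filtering. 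I would then set $m=\inf\{\int_0^T\!\int_{\bar{\mathcal{O}}}d\nu^v:\ v\in\mathscr{S}_K\}$, pick $v_n\in\mathscr{S}_K$ with mass tending to $m$, and replace them by $w_n=v_1\wedge\cdots\wedge v_n\in\mathscr{S}_K$, a decreasing sequence whose masses still tend to $m$. The energy identity for potentials, $\int_0^T\mathcal{E}(w_n)\,dt=\tfrac12\|w_{n,0}\|^2-\tfrac12\|w_{n,T}\|^2+\int w_n\,d\nu^{w_n}$ (obtained by testing $\partial_t w_n-\mathcal{A}w_n=\nu^{w_n}$ on $(0,T)$ with $w_n$), together with an $L^\infty$ bound on $w_1$, gives a uniform $\mathcal{K}$-bound on $(w_n)$. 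Invoking (iii), $w_n\downarrow v_K$ strongly in $L^2(0,T;H^1(\mathcal{O}))$ with $v_K\in\mathcal{P}$; the strong convergence lets a diagonal argument place $v_K\in\mathscr{S}_K$, while $v_K\le v$ for every $v\in\mathscr{S}_K$ by construction, so $v_K$ is the smallest element.

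The hard part is exactly property (iii): a monotone bounded sequence of potentials a priori converges only weakly in $L^2(0,T;H^1(\mathcal{O}))$, whereas membership of $v_K$ in $\mathscr{S}_K$ requires strong convergence. I would attack this through the energy identity applied to the differences: for $n>m$, writing $\phi=w_m-w_n\ge0$ and discarding the two nonpositive contributions, one gets $\int_0^T\mathcal{E}(\phi)\,dt\le\tfrac12\|\phi_0\|^2+\int(w_m-v_K)\,d\nu^{w_m}$, so it suffices that $\|w_{m,0}-w_{n,0}\|\to0$ (clear, since $w_{n,0}$ decreases in $L^2(\mathcal{O})$) and that $\int(w_m-v_K)\,d\nu^{w_m}\to0$. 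For the latter I would use that $\nu^{w_m}\rightharpoonup\nu^{v_K}$ weakly-$*$ as measures on $[0,T[\times\bar{\mathcal{O}}$ (a consequence of weak $L^2(0,T;H^1(\mathcal{O}))$ convergence and the representation in Proposition \ref{presentation}), combined with the uniform mass bound and the monotone decrease $w_m-v_K\downarrow0$; this is the delicate point where Pierre's capacitary estimates are genuinely needed.

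Granting the smallest element $v_K$, the two remaining claims follow from minimality. For the support, fix an open $G$ with $\bar G\cap K=\emptyset$ and replace $v_K$ inside $G$ by the caloric function with the same parabolic boundary data (retaining the Neumann condition where $G$ meets $\partial\mathcal{O}$); call it $v'$. Since $v_K$ is a supersolution we have $v'\le v_K$ and $v'\in\mathcal{P}$ with $\nu^{v'}=\nu^{v_K}$ off $G$; as $v'=v_K$ on a neighbourhood of $K$ one checks $v'\in\mathscr{S}_K$, so minimality forces $v_K\le v'$, whence $v_K=v'$ and $\nu^{v_K}(G)=0$. As $G$ is arbitrary, $\mathrm{supp}\,\nu^{v_K}\subset K$. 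For the mass, testing the representation with functions depending only on $t$ and approximating $\mathbf 1_{[0,T[}$ (for which $\mathcal{E}(\cdot,v)=0$) yields the identity $\int_0^T\!\int_{\bar{\mathcal{O}}}d\nu^v=\int_{\mathcal{O}}v_T\,dx$ for every $v\in\mathcal{P}$. Since $v_K\le v$ gives $v_{K,T}\le v_T$ and hence $\int_{\mathcal{O}}v_{K,T}\,dx\le\int_{\mathcal{O}}v_T\,dx$, the smallest potential also carries the smallest mass, which is precisely the asserted equality.
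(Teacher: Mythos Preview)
The paper does not prove this proposition; it is simply quoted from Pierre \cite{PIERRE} (Proposition~2.1 there), so there is no argument in the present paper to compare yours against. Your outline is, in spirit, the standard route one finds in \cite{PIERRE}: use the lattice property of $\mathcal{P}$ to produce a decreasing minimizing sequence in $\mathscr{S}_K$, pass to the limit, and then deduce the support and mass statements from minimality. Your mass identity $\int d\nu^v=\int_{\mathcal{O}}v_T\,dx$ is correct and in fact follows in one line from Proposition~\ref{approximation} by taking $\varphi\equiv 1$.

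One genuine gap: from your construction you only get $v_K\le w_n$ for the particular sequence $(w_n)$ you built; the claim ``$v_K\le v$ for every $v\in\mathscr{S}_K$ by construction'' is not justified. Knowing that $v_K$ realizes the minimal mass and that $v_K\wedge v\in\mathscr{S}_K$ with $(v_K\wedge v)_T\le v_{K,T}$ gives back only $(v_K\wedge v)_T=v_{K,T}$ a.e., which does not by itself force $v_K\le v$. The clean fix (and what Pierre does) is to take $v_K$ as the lattice infimum of $\mathscr{S}_K$ from the start---choosing a countable sequence whose successive minima decrease to the essential infimum---rather than as the limit of a mass-minimizing sequence; then $v_K\le v$ holds by definition, and the mass equality follows a posteriori from your identity. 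A smaller point: the ``energy identity'' you invoke for $w_n\in\mathcal{P}$ is in general only an inequality (elements of $\mathcal{P}$ need not lie in $\mathcal{W}$, so pairing $\nu^{w_n}$ against $w_n$ itself is delicate), but the inequality points the right way for your $\mathcal{K}$-bound, so this does not break the argument.
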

\begin{definition}(Parabolic Capacity)\begin{itemize}
                                        \item Let $K\subset [0,T[\times\bar{\mathcal{O}}$ be compact, we define
$cap(K)=\int_0^T\int_{\bar{\mathcal{O}}}d\nu^v_K$;
                                        \item let $O\subset
[0,T[\times\bar{\mathcal{O}}$ be open, we define $cap(O)=\sup\{cap(K);\
K\subset O\ compact\}$;
                                        \item   for any borelian
$E\subset [0,T[\times\bar{\mathcal{O}}$, we define $cap(E)=\inf\{cap(O);\
O\supset E\ open\}$.
                                      \end{itemize}

\end{definition}
\begin{definition}A property is said to hold quasi-everywhere (in short q.e.)
if it holds outside a set of null capacity.
\end{definition}
\begin{definition}(Quasi-continuous)

\noindent A function $u:[0,T[\times\bar{\mathcal{O}}\rightarrow\mathbb{R}$
 is called quasi-continuous, if there exists a decreasing sequence of open
subsets $O_n$ of $[0,T[\times\bar{\mathcal{O}}$ with: \begin{enumerate}
                        \item for all $n$, the restriction of $u$ to the complement of $O_n$ is
continuous;
                                       \item $\lim_{n\rightarrow+\infty}cap\;(O_n)=0$.
                                     \end{enumerate}
We say that $u$ admits a quasi-continuous version, if there exists
$\tilde{u}$ quasi-continuous  such that $\tilde{u}=u\ a.e.$.
\end{definition}
The next proposition, whose proof may be found in \cite{Pierre} or \cite{PIERRE} shall play an important role in the sequel:
\begin{proposition}\label{Versiont} Let $K\subset \bar{\cO}$ a compact set, then $\forall t\in [0,T[$
$$cap (\{ t\}\times K)=\lambda_d (K),$$
where $\lambda_d$ is the Lebesgue measure on $\cO$.\\
As a consequence, if $u: [0,T[\times \cO\rightarrow \R$ is a map defined quasi-everywhere then it defines uniquely a map from $[0,T[$ into $L^2 (\cO)$.
In other words, for any $t\in [0,T[$, $u_t$ is defined without any ambiguity as an element in $L^2 (\cO)$.
Moreover, if $u\in \mathcal{P}$, it admits  version $\bar{u}$ which is left continuous on $[0,T]$ with values in $L^2 (\cO )$ so that $u_T =\bar{u}_{T^-}$ is also defined without ambiguity.
\end{proposition}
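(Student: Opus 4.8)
The plan is to establish the capacity identity by squeezing $cap(\{t_0\}\times K)$ between $\lambda_d(K)$ from above and below, and then to read off the two structural statements by a slicing argument. Fix $t_0\in[0,T)$ and write $K'=\{t_0\}\times K$; recall that the class $\mathscr{S}_{K'}$ admits a smallest element $v_{K'}\in\mathcal{P}$, whose regular measure $\nu^{v_{K'}}$ is supported in $K'$ and realizes the infimum defining $cap(K')$ (Proposition 2.1 in \cite{PIERRE}).

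For the upper bound I would exhibit an explicit competitor, the natural one being the heat potential switched on at time $t_0$,
$$v(t,x)=\begin{cases}0,&0\le t<t_0,\\ P_{t-t_0}\mathbf{1}_K(x),&t_0\le t<T,\end{cases}$$
which lies in $\mathcal{K}$ and solves $\partial_t v=\mathcal{A}v$ for $t\ne t_0$, so that the only contribution to $\partial_t v-\mathcal{A}v$ is the upward jump of size $\mathbf{1}_K$ across $t=t_0$. By Proposition \ref{presentation} this gives $\nu^{v}=\mathbf{1}_K(x)\,dx\otimes\delta_{t_0}(dt)$ and hence $\int_0^T\int_{\bar{\cO}}d\nu^{v}=\lambda_d(K)$. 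A mollification in time together with a slight spatial fattening of $K$ produces potentials $v_n\ge 1$ on a neighborhood of $K'$ with $v_n\to v$ in $L^2(0,T;H^1(\cO))$, so $v$ is $\nu$-superior to $1$ on $K'$ and therefore $cap(K')\le\lambda_d(K)$.

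For the matching lower bound I would exploit that $\nu:=\nu^{v_{K'}}$ is carried by the slice $\{t_0\}\times K$, hence factorizes as $\nu=\rho(x)\,dx\otimes\delta_{t_0}$ for some nonnegative $\rho$ supported in $K$, the $L^2$-regularity of the left/right traces of $v_{K'}$ forcing the time-atom to be an honest $L^2$-function. Minimality makes $v_{K'}$ vanish for $t<t_0$, so the jump relation gives $v_{K'}(t_0^+)=\rho$, while $\nu$-superiority to $1$ on $K$ forces $\rho\ge\mathbf{1}_K$ there; thus $\int d\nu=\int_K\rho\,dx\ge\lambda_d(K)$, and combined with the upper bound $cap(K')=\int d\nu^{v_{K'}}=\lambda_d(K)$. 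I expect this lower bound — precisely, justifying the factorization of $\nu$, the vanishing of $v_{K'}$ before $t_0$, and the passage from $\nu$-superiority to the pointwise inequality $\rho\ge\mathbf{1}_K$ — to be the main obstacle, since it is exactly the delicate core of Pierre's potential theory (\cite{Pierre,PIERRE}).

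Finally, the consequence follows by slicing. If $u$ is defined off a set $N$ with $cap(N)=0$, fix $t\in[0,T)$ and set $N_t=\{x\in\cO:(t,x)\in N\}$. For every compact $K\subset N_t$ one has $\{t\}\times K\subset N$, so by monotonicity of the capacity and the identity just proved $\lambda_d(K)=cap(\{t\}\times K)\le cap(N)=0$; inner regularity of Lebesgue measure then yields $\lambda_d(N_t)=0$. Hence $u_t=u(t,\cdot)$ is defined $\lambda_d$-a.e. on $\cO$ and determines a single element of $L^2(\cO)$ for each $t$, independently of the quasi-continuous representative chosen. When moreover $u\in\mathcal{P}$, Proposition \ref{presentation} supplies the left-continuous version $\bar u:[0,T]\to L^2(\cO)$ and its Remark defines $u_T=\bar u_{T^-}$, so every time-value, including the terminal one, is unambiguous.
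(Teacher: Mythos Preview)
The paper does not prove this proposition at all: the sentence preceding it reads ``The next proposition, whose proof may be found in \cite{Pierre} or \cite{PIERRE} shall play an important role in the sequel,'' and the statement is given without argument. So there is no ``paper's own proof'' to compare against; you have attempted considerably more than the authors, who are content to quote Pierre.

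Your sketch follows the natural line one would take if proving this from scratch. The upper bound via the switched-on heat potential $v(t,x)=\mathbf{1}_{\{t\ge t_0\}}P_{t-t_0}\mathbf{1}_K$ is the right competitor and gives the correct mass $\lambda_d(K)$; the approximation step (fattening $K$, smoothing in time) is routine. The slicing argument for the consequence is clean and correct: monotonicity of capacity plus inner regularity of Lebesgue measure does exactly what you say.

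Where your outline is honest about its own gaps is the lower bound, and you are right to flag it. Two points in particular need more than a sentence: (i) the claim that $\nu^{v_{K'}}$, being supported on $\{t_0\}\times K$, must have the form $\rho\,dx\otimes\delta_{t_0}$ with $\rho\in L^2$ rests on identifying the time-atom with the $L^2$ jump $\hat v_{K'}(t_0)-\bar v_{K'}(t_0)$, which is precisely part of Pierre's representation theory and not obvious from the bare definition of regular measure; (ii) passing from ``$\nu$-superior to $1$ on $K'$'' (an $L^2(0,T;H^1)$ limit statement) to ``$\rho\ge\mathbf{1}_K$ a.e.'' requires a trace/limit argument at the single time $t_0$ that does not follow from $L^2$-in-time convergence alone. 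Both are handled in \cite{PIERRE}, and since the paper under review simply cites that reference, your acknowledgment that this is ``the delicate core of Pierre's potential theory'' is the appropriate level of detail here.
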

\begin{remark} The previous proposition applies if for example $u$ is quasi-continuous.
\end{remark}

\begin{proposition}\label{approximation}(Theorem III.1 in \cite{PIERRE})
If $\varphi \in\mathcal{W}$, then it admits a unique quasi-continuous version that we denote by $\tilde{\varphi}$. Moreover, for all $v\in \mathcal{P}$,
the following relation holds:
$$\int_{[0,T[\times \bar{\cO}} \tilde{\varphi}d\nu^v =\int_0^T \left( -\partial_t \varphi ,v\right)+\mathcal{E}(\varphi ,v)\, dt +\left( \varphi_T ,v_{T}\right) .$$
\end{proposition}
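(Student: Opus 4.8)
The plan is to prove the two assertions separately: first the existence and uniqueness of a quasi-continuous version $\tilde\varphi$, and then the integral identity, the latter by reducing to the representation already granted by Proposition \ref{presentation}. Throughout, the two technical pillars are a weak-type capacity estimate controlling the level sets of an element of $\mathcal{W}$ by its norm, and the fact that the regular measure $\nu^v$ charges no set of zero capacity.

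For the quasi-continuous version, I would start from the observation that a function in $\mathcal{W}$ which happens to be continuous on $[0,T[\times\bar\cO$ is trivially its own quasi-continuous version, and that such functions are dense in $\mathcal{W}$ for the $\|\cdot\|_{\mathcal{W}}$-norm; this density is classical since $\mathcal{W}\hookrightarrow C([0,T];L^2(\cO))$ (see \cite{LionsMagenes}). The heart of the matter is then an inequality of the form
$$cap\big(\{(t,x)\in[0,T[\times\bar\cO:\ |\varphi(t,x)|>\lambda\}\big)\le \frac{C}{\lambda^2}\,\|\varphi\|^2_{\mathcal{W}},\qquad \lambda>0,$$
valid for continuous $\varphi\in\mathcal{W}$. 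Granting this, if $\varphi_n\to\varphi$ in $\mathcal{W}$ with $\varphi_n$ continuous, one extracts a subsequence along which $cap(\{|\varphi_{n+1}-\varphi_n|>2^{-n}\})\le 2^{-n}$; by sub-additivity of the capacity the $\varphi_n$ then converge uniformly off an open set of arbitrarily small capacity, so the limit $\tilde\varphi$ is quasi-continuous and equals $\varphi$ a.e. Uniqueness follows because a quasi-continuous function vanishing a.e. vanishes q.e.: its positivity set is, off a set of small capacity, relatively open and of zero Lebesgue measure, hence of zero capacity by the fibrewise identity of Proposition \ref{Versiont}.

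To establish the identity, I would first note that for $\varphi\in\mathcal{W}_T\cap\mathcal{C}$ it is exactly Proposition \ref{presentation}, since there $\tilde\varphi=\varphi$ and $\varphi_T=0$. The next step extends it to continuous $\varphi\in\mathcal{W}$ not vanishing at $T$: pick a Lipschitz time cut-off $\theta_n$ with $\theta_n\equiv1$ on $[0,T-\frac1n]$, $\theta_n(T)=0$ and $\theta_n'=-n$ on $(T-\frac1n,T)$, apply the identity to $\theta_n\varphi\in\mathcal{W}_T\cap\mathcal{C}$, and let $n\to\infty$. The term carrying $\theta_n'$ equals $\int_{T-1/n}^{T}n\,(\varphi_t,v_t)\,dt$, the average of $t\mapsto(\varphi_t,\bar v_t)$ over $[T-\frac1n,T]$, which converges to $(\varphi_T,v_T)$ because this map is left-continuous at $T$ (here $\varphi\in C([0,T];L^2(\cO))$ and $\bar v$ is the left-continuous version with $v_T=\bar v_{T^-}$), producing precisely the boundary term; the $\theta_n$-weighted terms converge by dominated convergence, using on the left that $\nu^v$ does not charge $\{T\}$. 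Finally, for general $\varphi\in\mathcal{W}$ I would approximate by continuous $\varphi_n\to\varphi$ in $\mathcal{W}$: the right-hand side is continuous for the $\mathcal{W}$-norm, since $v\in\mathcal{K}\subset L^2(0,T;H^1(\cO))$ pairs continuously with $\partial_t\varphi_n$ and with $\varphi_n$, while the terminal values converge in $L^2(\cO)$ through $\mathcal{W}\hookrightarrow C([0,T];L^2(\cO))$.

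The main obstacle is the passage to the limit on the left-hand side, $\int\tilde\varphi_n\,d\nu^v\to\int\tilde\varphi\,d\nu^v$. This needs two facts acting together: that the finite positive measure $\nu^v$ ignores sets of zero capacity, and that the subsequence extracted above converges to $\tilde\varphi$ quasi-uniformly, i.e. uniformly outside open sets whose capacity tends to zero; combined with the finiteness of $\nu^v$ on the compact $[0,T]\times\bar\cO$ these yield convergence of the integrals and, incidentally, that $\tilde\varphi$ is $\nu^v$-integrable. Thus the genuinely delicate point, and the step I expect to be hardest, is the capacity estimate displayed above, from which both the construction of $\tilde\varphi$ and its admissibility as a $\nu^v$-integrand flow; in the framework of \cite{Pierre,PIERRE} it is obtained by dominating $|\varphi|$ by a parabolic potential whose associated measure has mass comparable to $\|\varphi\|_{\mathcal{W}}$ and then invoking the definition of $cap$ through such potentials.
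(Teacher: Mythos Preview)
The paper does not give its own proof of this proposition: it is quoted as Theorem III.1 of \cite{PIERRE} and used as a black box, so there is nothing in the paper to compare your argument against directly.

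That said, your outline is essentially the route taken in \cite{PIERRE}, and the strategy is sound: density of continuous functions in $\mathcal{W}$, a weak-type capacitary inequality $cap(\{|\varphi|>\lambda\})\le C\lambda^{-2}\|\varphi\|_{\mathcal{W}}^2$ to manufacture quasi-uniform convergence, then extension of the representation formula from $\mathcal{W}_T\cap\mathcal{C}$ (where it is Proposition~\ref{presentation}) to all of $\mathcal{W}$ via a time cut-off producing the terminal term $(\varphi_T,v_T)$, followed by a density argument. You have correctly isolated the capacitary inequality as the crux, and your indication of how it is obtained (dominate $|\varphi|$ by a parabolic potential of controlled $\mathcal{K}$-norm, then invoke Lemma~\ref{cap}) is the right one.

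Two minor points to tighten. First, the embedding $\mathcal{W}\hookrightarrow C([0,T];L^2(\cO))$ alone does not yield density of functions continuous jointly in $(t,x)$; you need a separate (routine) mollification argument. Second, for the passage to the limit in $\int\tilde\varphi_n\,d\nu^v$ you use both that $\nu^v$ is a finite measure on $[0,T[\times\bar\cO$ and that it does not charge polar sets; neither is stated in the present paper, though both are proved in \cite{PIERRE} (finiteness follows by testing Proposition~\ref{presentation} against a cut-off tending to $1$; absolute continuity with respect to capacity is the content of ``regularity'' of $\nu^v$). Once these are granted your argument goes through.
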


\subsection{Applications to PDE's with obstacle}
For any function $\psi:[0,T[\times\mathcal{O}\rightarrow \bbR$ and
$u_0\in L^2(\mathcal{O})$, following  M. Pierre
\cite{Pierre,PIERRE}, F. Mignot and J.P. Puel \cite{MignotPuel}, we
define\begin{equation}\label{kappa}\kappa(\psi,u_0)= \text{ess}\inf
\{u\in\mathcal{P};\ u\geq\psi\ a.e.,\ u(0)\geq
u_0\}.\end{equation}This lower bound exists and is an element in
$\mathcal{P}$. Moreover, when $\psi$ is quasi-continuous, this
potential is the solution of the following reflected
problem:\begin{eqnarray*}\kappa\in\mathcal{P},\ \kappa\geq\psi,\
\frac{\partial\kappa}{\partial t}-\cA\kappa=0\text{ and }\frac{\partial \kappa}{\partial v_L}=0\ \mbox{on}\ \{u>\psi \},\
\kappa(0)=u_0.\end{eqnarray*} Mignot and Puel have proved in
\cite{MignotPuel} that $\kappa(\psi,u_0)$ is the limit (increasingly
and weakly in $L^2 (0,T; H^1 (\cO))$) when $\epsilon$ tends to $0$
of the solution of the following penalized equation
\begin{eqnarray*}u_\epsilon\in\mathcal{W},\ u_\epsilon(0)=u_0,\ \frac{\partial u_\epsilon}{\partial t}-
\cA u_\epsilon-\frac{(u_\epsilon-\psi)^-}{\epsilon}=0,\frac{\partial u_\epsilon}{\partial v_L}=0.\end{eqnarray*}
Let us point out that they obtain this result in the more general
case  where $\psi$ is only measurable from $[0,T[$ into
$L^2(\mathcal{O})$.\\For given $f\in L^2(0,T; H^{-1}(\mathcal{O}))$ and $l \in L^2(0,T;L^2(\partial O))$,
we denote by $\kappa_{u_0}^{f,l}$ the solution of the following
problem:$$\kappa\in\mathcal{W},\ \kappa(0)=u_0,\
\frac{\partial\kappa}{\partial t}-\cA\kappa=f,\frac{\partial \kappa}{\partial v_L}=l.$$
\\The
next theorem ensures  existence and uniqueness of the
solution of parabolic PDE with obstacle, it is proved in
\cite{Pierre}, Theorem 1.1. The proof  is based on a regularization argument of the obstacle, using the results of \cite{CT75}.

\begin{theorem}
Let $\psi:[0,T[\times\mathcal{O}\rightarrow\mathbb{R}$ be
quasi-continuous, suppose that there exists $\zeta\in\mathcal{P}$
with $|\psi|\leq\zeta\ a.e.$, $f\in L^2(0,T;H^{-1}(\mathcal{O})),l\in L^2(0,T;L^2(\partial \cO))$,
and the initial value $u_0\in L^2(\mathcal{O})$ with
$u_0\geq\psi(0)$, then there exists a unique
$u\in\kappa_{u_0}^{f,l}+\mathcal{P}$ quasi-continuous such that:
\begin{eqnarray*}u(0)=u_0,\ \tilde{u}\geq\psi,\ q.e.;\quad\int_0^T\int_{\bar{\mathcal{O}}}(\tilde{u}-\tilde{\psi})d\nu^{u-\kappa_{u_0}^{f,l}}=0\end{eqnarray*}
\end{theorem}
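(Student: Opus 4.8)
The plan is to reduce the statement to the \emph{homogeneous} obstacle problem for parabolic potentials treated in \cite{Pierre}, and then transcribe Pierre's regularization and penalization arguments into the present Neumann framework. The point is that the parabolic potential theory of Section \ref{capacity} is built on the Dirichlet form $(\mathcal{E},F)$ with $F=H^1(\mathcal{O})$, i.e. with no boundary constraint, so that the Neumann condition is already encoded in the definitions of $\mathcal{P}$ and of $\nu^v$; the data $f$ and $l$ enter only through the linear solution $\kappa_{u_0}^{f,l}$.

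First I would set $v:=u-\kappa_{u_0}^{f,l}$ and $\bar\psi:=\psi-\widetilde{\kappa_{u_0}^{f,l}}$. Since $\kappa_{u_0}^{f,l}\in\mathcal{W}$, Proposition \ref{approximation} gives it a quasi-continuous version, so $\bar\psi$ is again quasi-continuous; moreover $|\bar\psi|$ stays dominated a.e. by an element of $\mathcal{P}$ (replace $\zeta$ by $\zeta$ plus a potential majorizing $|\widetilde{\kappa_{u_0}^{f,l}}|$), and $\bar\psi(0)=\psi(0)-u_0\le 0$. Under this change of unknown, and using $\nu^{u-\kappa_{u_0}^{f,l}}=\nu^v$, the four requirements on $u$ become: $v\in\mathcal{P}$, $v(0)=0$, $\tilde v\ge\bar\psi$ q.e., and $\int_0^T\int_{\bar{\mathcal{O}}}(\tilde v-\bar\psi)\,d\nu^v=0$. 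Thus it suffices to solve the homogeneous reflected problem with obstacle $\bar\psi$ and zero initial and source data.

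For existence I would take the candidate $v:=\kappa(\bar\psi,0)$ defined in \eqref{kappa}, so that $u=\kappa_{u_0}^{f,l}+v$. By construction $v\in\mathcal{P}$, and the Mignot--Puel penalization recalled after \eqref{kappa} exhibits $v$ as the increasing weak-$L^2(0,T;H^1(\mathcal{O}))$ limit of the solutions $v_\epsilon$ of the penalized Neumann problem $\partial_t v_\epsilon-\cA v_\epsilon-\epsilon^{-1}(v_\epsilon-\bar\psi)^-=0$, $v_\epsilon(0)=0$, $\frac{\partial v_\epsilon}{\partial v_L}=0$; passing to the limit yields $v(0)=0$. The genuinely hard part is to upgrade the a.e. inequality $v\ge\bar\psi$ to the quasi-everywhere inequality $\tilde v\ge\bar\psi$ together with the quasi-continuity of $v$, and then to establish the minimal Skorohod condition. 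Following \cite{CT75} and \cite{Pierre} I would first regularize: approximate $\bar\psi$ by obstacles $\bar\psi_n$ that are themselves differences of potentials, for which the solution $v_n=\kappa(\bar\psi_n,0)$ is quasi-continuous and satisfies $\tilde v_n\ge\bar\psi_n$ q.e.; the comparison and contraction estimates then control $\operatorname{cap}$ of the sets where $\tilde v_n$ differs from $\tilde v$, so that along a subsequence $\tilde v_n\to\tilde v$ quasi-uniformly, giving both the quasi-continuity of $v$ and $\tilde v\ge\bar\psi$ q.e. For the Skorohod condition, the penalizing measures $\nu_\epsilon:=\epsilon^{-1}(v_\epsilon-\bar\psi)^-$ converge weakly to $\nu^v$ and are supported in $\{v_\epsilon<\bar\psi\}$; combining this support property with the integration formula of Proposition \ref{approximation} (which rewrites $\int\tilde\varphi\,d\nu^v$ in terms of energy pairings) gives $\int_0^T\int_{\bar{\mathcal{O}}}(\tilde v-\bar\psi)\,d\nu^v\le 0$, while the reverse inequality is automatic because $\tilde v\ge\bar\psi$ q.e. and $\nu^v$ charges no polar set. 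I expect this passage to the limit in capacity, and the identification of the limit measure $\nu^v$, to be the principal obstacle.

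Finally, for uniqueness I would take two solutions $u^1,u^2$ with associated measures $\nu^i=\nu^{u^i-\kappa_{u_0}^{f,l}}$ and test the equation satisfied by $w=u^1-u^2\in\mathcal{P}-\mathcal{P}$ against itself, using the integration-by-parts identity of Pierre's theory for differences of potentials. This yields $\tfrac12\|u^1_T-u^2_T\|^2+\int_0^T\mathcal{E}(u^1-u^2)\,dt=\int_{[0,T[\times\bar{\mathcal{O}}}(\tilde u^1-\tilde u^2)\,d(\nu^1-\nu^2)$. Expanding the right-hand side and using each Skorohod identity together with $\tilde u^j\ge\psi$ q.e. and $\nu^i\ge0$ not charging polar sets, every resulting cross term is $\le 0$, so the right-hand side is $\le 0$. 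Since the left-hand side is nonnegative, $\mathcal{E}$ is coercive by the uniform ellipticity, and $(u^1-u^2)(0)=0$, I conclude $u^1=u^2$.
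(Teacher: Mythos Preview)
Your proposal is correct and follows the same route the paper indicates: the paper does not give its own proof here but simply cites \cite{Pierre}, Theorem 1.1, noting that ``the proof is based on a regularization argument of the obstacle, using the results of \cite{CT75}.'' Your outline---reduction to the homogeneous problem via $v=u-\kappa_{u_0}^{f,l}$, construction of the candidate $v=\kappa(\bar\psi,0)$ through the Mignot--Puel penalization, upgrading to quasi-continuity and the q.e.\ inequality by the Charrier--Troianiello regularization of the obstacle, and uniqueness via the energy identity combined with the Skorohod condition---is exactly this program, spelled out in more detail than the paper itself provides.
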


%
We end this section by a convergence lemma which plays an important role in our approach (Lemma 3.8 in \cite{PIERRE}):
\begin{lemma}\label{convergemeas}
If $v^n\in\mathcal{P}$ is a bounded sequence in $\mathcal{K}$ and
converges weakly to $v$ in $L^2(0,T;H^1(\mathcal{O}))$; if $u$ is
a quasi-continuous function and $|u|$ is bounded by a element in
$\mathcal{P}$. Then
$$\lim_{n\rightarrow+\infty}\int_0^T\int_{\bar{\mathcal{O}}}ud\nu^{v^n}=\int_0^T\int_
{\bar{\mathcal{O}}}ud\nu^{v}.$$
\end{lemma}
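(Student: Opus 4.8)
The engine of the proof is the integration-by-parts identity of Proposition \ref{approximation}, which converts the pairing $\int u\,d\nu^{v}$ into a bilinear expression in $v$ that is continuous for weak convergence in $L^2(0,T;H^1(\mathcal{O}))$. The plan is threefold: first record the two preliminary facts that $v\in\mathcal{P}$ (so that $\nu^{v}$ is defined) and that the total masses $\nu^{v^n}([0,T[\times\bar{\mathcal{O}})$ are uniformly bounded; then prove the claim for the dense class $u=\tilde\varphi$ with $\varphi\in\mathcal{W}_T$, where it is immediate; and finally remove the restriction on $u$ by a quasi-uniform approximation. To see $v\in\mathcal{P}$, note that the defining inequality is linear in $v$ and tested against the fixed cone $\mathcal{W}_T^+$, hence is preserved under weak $L^2(0,T;H^1)$-limits. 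For the mass bound, apply Proposition \ref{approximation} with the constant function $\varphi\equiv1$ (which lies in $\mathcal{W}$, with $\partial_t\varphi=0$ and $\mathcal{E}(1,\cdot)=0$): this gives $\nu^{v^n}([0,T[\times\bar{\mathcal{O}})=(1,v^n_T)\le|\mathcal{O}|^{1/2}\|v^n_T\|$, and since $\bar{v}^{n}$ is left-continuous into $L^2(\mathcal{O})$ one has $\|v^n_T\|\le\sup_{t<T}\|v^n_t\|\le\|v^n\|_{\mathcal{K}}\le C$, so the masses are uniformly bounded.

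For the dense class, when $\varphi\in\mathcal{W}_T$ the boundary term $(\varphi_T,v^n_T)$ vanishes and Proposition \ref{approximation} reads
\[
\int_{[0,T[\times\bar{\mathcal{O}}}\tilde\varphi\,d\nu^{v^n}=\int_0^T\left(-\frac{\partial\varphi_t}{\partial t},v^n_t\right)dt+\int_0^T\mathcal{E}(\varphi_t,v^n_t)\,dt .
\]
Both terms on the right are continuous linear functionals of $v^n$ on $L^2(0,T;H^1(\mathcal{O}))$: the first because $\partial_t\varphi\in L^2(0,T;H^{-1}(\mathcal{O}))$, the second because $\mathcal{E}(\varphi,\cdot)$ is bounded on $L^2(0,T;H^1)$ for fixed $\varphi$ and bounded $a$. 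Hence by weak convergence they pass to the same expression with $v^n$ replaced by $v$, which by Proposition \ref{approximation} again equals $\int\tilde\varphi\,d\nu^v$. This settles the claim on the dense class.

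For general quasi-continuous $u$ with $|u|\le\zeta$, $\zeta\in\mathcal{P}$, I would approximate $u$ quasi-uniformly by $\tilde\varphi_k$ with $\varphi_k\in\mathcal{W}_T$: that is, $\tilde\varphi_k\to u$ uniformly off open sets $O_\delta$ with $cap(O_\delta)<\delta$, arranging also $|\tilde\varphi_k|\le C\zeta$. Splitting $\int(u-\tilde\varphi_k)\,d\nu^{v^n}$ over $O_\delta^c$ and $O_\delta$, the first part is bounded by $\varepsilon\,\sup_n\nu^{v^n}([0,T[\times\bar{\mathcal{O}})$, which is controlled by the uniform mass bound; on $O_\delta$ one dominates the integrand by a multiple of $\zeta$ and must show $\sup_n\int_{O_\delta}\zeta\,d\nu^{v^n}\to0$ as $\delta\to0$. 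Granting this, the approximation converges uniformly in $n$; combined with the dense-class result and dominated convergence for the single measure $\nu^v$ (which charges no polar set), an exchange of limits yields the lemma.

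The main obstacle is precisely the uniform tightness $\sup_n\int_{O_\delta}\zeta\,d\nu^{v^n}\to0$, and this is exactly where the two hypotheses are used. First one bounds $\int\tilde\zeta\,d\nu^{v^n}$ uniformly by an energy estimate (reducing, if needed, to $\zeta\in\mathcal{W}$ via the penalized approximation of potentials recalled in Section \ref{capacity}, for which Proposition \ref{approximation} gives a bound in terms of $\|\zeta\|_{\mathcal{W}}$ and $\|v^n\|_{\mathcal{K}}$). Second, one estimates the mass $\nu^{v^n}(O_\delta)$ of a small-capacity set by comparison with the capacitary potential $v_{O_\delta}$, whose $L^2(0,T;H^1)$-norm is governed by $cap(O_\delta)$; the uniformity in $n$ then comes solely from the single $\mathcal{K}$-bound on the sequence. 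Making this capacitary comparison quantitative and uniform in $n$ is the technical heart of the argument, while everything else is linear algebra on the representation formula.
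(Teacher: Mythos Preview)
The paper does not prove this lemma: it is quoted verbatim as Lemma~3.8 of Pierre~\cite{PIERRE}, with no argument given. So there is no in-paper proof to compare against, and your sketch must be judged on its own.

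Your strategy---reduce to $\varphi\in\mathcal{W}_T$ via the representation formula of Proposition~\ref{approximation}, then approximate---is the right shape, and the dense-class step is clean. But the extension step has two genuine gaps that you flag but do not close.

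First, you assume that an arbitrary quasi-continuous $u$ with $|u|\le\zeta\in\mathcal{P}$ can be approximated quasi-uniformly by $\tilde\varphi_k$ with $\varphi_k\in\mathcal{W}_T$ and $|\tilde\varphi_k|\le C\zeta$. Proposition~\ref{approximation} gives the converse direction (elements of $\mathcal{W}$ admit quasi-continuous versions), not density of $\mathcal{W}_T$ in the class of quasi-continuous functions dominated by a potential. This density, together with the domination $|\tilde\varphi_k|\le C\zeta$, is not free and is precisely what Pierre's framework supplies through the structure of parabolic potentials; you would need to invoke or reprove it.

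Second, and more seriously, the uniform tightness $\sup_n\int_{O_\delta}\zeta\,d\nu^{v^n}\to0$ is the crux, and your last paragraph is a heuristic rather than an argument. Bounding $\nu^{v^n}(O_\delta)$ by comparison with the capacitary potential $v_{O_\delta}$ requires a quantitative inequality of the form $\nu^{v^n}(O_\delta)\le C\|v^n\|_{\mathcal{K}}\cdot\psi(\mathrm{cap}(O_\delta))$ for some modulus $\psi$, and establishing this uniformly in $n$ from the $\mathcal{K}$-bound alone is exactly the content of Pierre's Lemma~3.8 and its supporting estimates. Writing ``granting this'' concedes the lemma itself. To complete the proof you would need either to reproduce Pierre's capacitary estimate or to argue via a different compactness (e.g.\ extracting a vaguely convergent subsequence of the measures and identifying the limit), neither of which is in your text.
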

\begin{remark}For the more general case one can see \cite{PIERRE} Lemma 3.8. \end{remark}

\section{Quasi--continuity of the solution of SPDE without obstacle}\label{quasi-contSPDE}

Under assumptions {\bf(H)} and {\bf (I)}, SPDE (\ref{spde_linear_neumann}) with  Neumann boundary condition admits a
unique  solution  in $\mathcal{H}_T$. We denote it by
$\mathcal{U}(\xi,f,g,h,l)$.

The main  theorem of this section is the following:
\begin{theorem}\label{mainquasicontinuity}
 Under assumptions {\bf(H)} and {\bf (I)}, $u=\mathcal{U}(\xi,f,g,h,l)$ the
solution of SPDE (\ref{spde_linear_neumann})  admits a quasi-continuous version
denoted by $\tilde{u}$ i.e.
 $u=\tilde{u}$ $dP\otimes dt\otimes dx -$a.e. and for almost all $w\in\Omega$,
 $(t,x)\rightarrow \tilde{u}_t
(w,x)$ is quasi-continuous.
\end{theorem}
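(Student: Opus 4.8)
The plan is to obtain $\tilde u$ as a quasi-uniform limit of solutions associated with smooth data, for which continuity---and hence quasi-continuity---is already available from the discussion preceding Lemma \ref{Ito}. First I would choose smooth approximations $(\xi^n,f^n,g^n,h^n,l^n)$ of $(\xi,f,g,h,l)$, converging in the $L^2$-spaces of Assumption (I), with each $h^n$ piecewise constant in time of the form $\sum_k h^n_k\mathbf 1_{[t_k,t_{k+1})}$. Setting $u^n=\mathcal U(\xi^n,f^n,g^n,h^n,l^n)$, the regularity results recalled in Section \ref{linearspdeneumann} give each $u^n$ a version that is continuous in $(t,x)$ for almost every $\omega$, hence quasi-continuous, while the energy estimate \eqref{estimate:u} shows $u^n\to u$ in $\mathcal H_T$. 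It then suffices to upgrade this $L^2$-convergence to convergence that is, $P$-a.s., uniform off open sets of arbitrarily small capacity, since such a limit of continuous functions is quasi-continuous.

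The heart of the argument is a probabilistic capacity estimate for the level sets of the solution: there is a constant $C$, depending only on the constants in Assumption (H), $T$ and $d$, such that for the difference $w=u^n-u^m$, which solves \eqref{spde_linear_neumann} with data $(\xi^n-\xi^m,\dots,l^n-l^m)$, one has for every $\lambda>0$
\begin{equation*}
E\Big[\mathrm{cap}\big(\{(t,x)\in[0,T)\times\bar{\mathcal O}:|\widetilde{w}_t(x)|>\lambda\}\big)\Big]\le \frac{C}{\lambda^2}\,E\Big[\|\xi^n-\xi^m\|^2+\int_0^T\big(\|f^n_t-f^m_t\|^2+\||g^n_t-g^m_t|\|^2+\||h^n_t-h^m_t|\|^2+\|l^n_t-l^m_t\|^2\big)dt\Big].
\end{equation*}
Before attacking this, I would dispose of the Neumann datum: by the trace theorem of Section \ref{linearspdeneumann} and the remark following it, the boundary term $l$ is identified with $\tilde l\in H^{-1}(\mathcal O)$ satisfying $\|\tilde l\|_{H^{-1}}\le \|Tr\|\,\|l\|_{L^2(\partial\mathcal O)}$, so the Neumann forcing is absorbed into an $H^{-1}$-valued source; the structure is thereby reduced to the one treated in the Dirichlet case, at the cost of the strengthened contraction condition (H)(5).

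To prove the capacity estimate I would exploit Pierre's deterministic bound, which controls $\mathrm{cap}(\{|\tilde\varphi|>\lambda\})$ by $\lambda^{-2}\|\varphi\|_{\mathcal W}^2$ for $\varphi\in\mathcal W$. The obstruction is that $w$ is not pathwise in $\mathcal W$: the stochastic convolution $N_t=\int_0^tP_{t-s}(h^n_s-h^m_s)\,dB_s$ is not of bounded variation in time, so $\partial_t w\notin L^2(0,T;H^{-1}(\mathcal O))$ for almost every $\omega$. I would therefore split $w=N+V$, where $V=w-N$ solves, for almost every fixed $\omega$, a deterministic parabolic problem with $H^{-1}$-source and thus lies in $\mathcal W$ pathwise; applying Pierre's bound to $V$ and taking expectations via \eqref{estimate:u} controls the $V$-part. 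The remaining, and main, difficulty is the stochastic convolution $N$: here I would regularize $N$ in time, use the piecewise-constant structure of $h^n$ so that on each subinterval $N$ is the solution of a regular SPDE, and combine a Burkholder--Davis--Gundy type maximal inequality in time with the time-slice identity $\mathrm{cap}(\{t\}\times K)=\lambda_d(K)$ of Proposition \ref{Versiont} to convert the capacity of the level set of $N$ into a space-time $L^2$-norm of $h^n-h^m$. This reconciliation of the stochastic and potential-theoretic ingredients is the step I expect to be the main obstacle.

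Granting the capacity estimate, I would conclude by a Borel--Cantelli argument. Extracting a subsequence along which the right-hand side decays geometrically, the sets where consecutive $u^{n_k}$ differ by more than $2^{-k}$ have summable expected capacity, so $P$-a.s. the sequence $u^{n_k}$ is uniformly Cauchy off a decreasing family of open sets whose capacities tend to $0$. The resulting pointwise limit $\tilde u$ is quasi-continuous for almost every $\omega$, and since $u^{n_k}\to u$ in $\mathcal H_T$ one has $\tilde u=u$ $\,dP\otimes dt\otimes dx$-a.e., which is precisely the asserted quasi-continuous version.
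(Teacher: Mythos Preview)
Your architecture matches the paper's: smooth approximations $(u^n)$ with continuous versions, a capacity estimate for level sets of the differences, and a Borel--Cantelli extraction. The divergence is in how the capacity estimate is obtained, and the paper's route bypasses precisely the obstacle you single out.

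The paper does \emph{not} decompose $w=u^n-u^m$ into a stochastic convolution $N$ plus a pathwise-$\mathcal{W}$ part $V$. Instead it works with the smallest parabolic potential dominating $w$, namely $\kappa=\kappa(w,w_0^+)\in\mathcal{P}$ introduced in Section~\ref{capacity} as the increasing limit of the penalized problems~\eqref{eq:1}. Since $\kappa\geq w$ (and the same construction applied to $-w$ handles the other sign), one has $\{|w|>\lambda\}\subset\{\kappa+\kappa'>\lambda\}$, and Lemma~\ref{cap} gives $\mathrm{cap}(\{|w|>\lambda\})\le C\lambda^{-2}\|\kappa+\kappa'\|_{\mathcal K}^2$. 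The substantive step is Lemma~\ref{estimoftau}: because $w$ itself solves the linear SPDE with data $(\xi^n-\xi^m,\dots,l^n-l^m)$, the penalized approximant $v^k$ of $\kappa$ satisfies an SPDE for $v^k-w$, to which It\^o's formula (Lemma~\ref{Ito}) applies; the penalization term has the good sign and is discarded, and after passing to the limit and using \eqref{estimate:u} one obtains $E\|\kappa\|_{\mathcal K}^2\le C\times(\text{$L^2$-norms of the data differences})$. The Neumann term is handled directly inside this energy computation via the trace bound, not by absorbing $l$ into an $H^{-1}$ source.

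This device treats $w$ as a whole and never isolates the stochastic convolution, so the difficulty you anticipated simply does not arise. By contrast, your proposed control of $N$ through the time-slice identity $\mathrm{cap}(\{t\}\times K)=\lambda_d(K)$ looks unworkable as stated: parabolic capacity is not subadditive over time slices, and there is no inequality relating $\mathrm{cap}(\{|N|>\lambda\})$ to $\int_0^T\lambda_d(\{|N_t|>\lambda\})\,dt$. The paper's potential-theoretic domination is exactly the missing idea that replaces your $V+N$ split.
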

Before giving the proof of this theorem, we need the following lemmas. The first one is proved in \cite{PIERRE}, Lemma 3.3:
\begin{lemma}\label{cap}There exists $C>0$ such that, for all open set $\vartheta\subset [0,T[\times \bar{\mathcal{O}}$ and  $v\in\mathcal{P}$ with $v\geq1\ a.e.$ on $\vartheta$:$$cap\vartheta\leq C\| v\|^2_{\mathcal{K}}.$$\end{lemma}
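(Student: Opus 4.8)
The plan is to pass from the open set to its compact subsets, replace $v$ by the extremal (equilibrium) potential, and then estimate the total mass of the associated regular measure by a parabolic energy identity. By the definition of capacity for open sets, $cap\,\vartheta=\sup\{cap\,K:\ K\subset\vartheta\text{ compact}\}$, so it suffices to bound $cap\,K$ by $C\|v\|^2_{\mathcal{K}}$ for an arbitrary compact $K\subset\vartheta$, with a constant independent of $K$. Fix such a $K$. Since $\vartheta$ is an open neighbourhood of $K$ on which $v\geq 1$ a.e., the constant sequence $v_n\equiv v$ shows $v\in\mathscr{S}_K$. As $\mathscr{S}_K$ admits a smallest element $v_K$ (the equilibrium potential), we get $0\leq v_K\leq v$ a.e., and $cap\,K=\int_0^T\int_{\bar{\mathcal{O}}}d\nu^{v_K}$.

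Next I would exploit that $v_K$ touches the threshold on the support of its own measure: the regular measure $\nu^{v_K}$ is carried (q.e.) by $\{v_K=1\}$, so that $\int d\nu^{v_K}=\int v_K\,d\nu^{v_K}$. Testing the relation $\nu^{v_K}=\frac{\partial v_K}{\partial t}-\mathcal{A}v_K$ against $v_K$ itself, via the parabolic It\^o/energy identity, yields
$$cap\,K=\int v_K\,d\nu^{v_K}=\tfrac12\|(v_K)_T\|^2-\tfrac12\|(v_K)_0\|^2+\int_0^T\mathcal{E}(v_K)\,dt\leq\tfrac12\|(v_K)_T\|^2+\int_0^T\mathcal{E}(v_K)\,dt.$$
The terminal term is harmless: from $0\leq v_K\leq v$ a.e. one gets $\|(v_K)_T\|\leq\|v_T\|\leq\|v\|_{\mathcal{K}}$. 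Thus the whole estimate reduces to controlling the Dirichlet energy $\int_0^T\mathcal{E}(v_K)\,dt$, equivalently $\|v_K\|_{\mathcal{K}}$, by $\|v\|_{\mathcal{K}}$.

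The main obstacle is exactly this energy comparison. The pointwise domination $v_K\leq v$ does \emph{not} pass to the gradients, so $\int_0^T\mathcal{E}(v_K)\,dt\leq\int_0^T\mathcal{E}(v)\,dt$ is not automatic; this is the crux of the lemma, and morally it is the statement that Pierre's mass-capacity is comparable to an energy-capacity. I would resolve it through the extremal (variational) characterization of the equilibrium potential: $v_K$ minimizes the energy, equivalently the $\mathcal{K}$-norm, among potentials that are $\nu$-superior to $1$ on $K$, and $v\in\mathscr{S}_K$ is a competitor, giving $\|v_K\|_{\mathcal{K}}\leq C\|v\|_{\mathcal{K}}$ and hence $cap\,K\leq C\|v\|^2_{\mathcal{K}}$.

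A secondary technical point, to be handled before the identities above are legitimate, is that $v_K,v\in\mathcal{P}\subset\mathcal{K}$ need not lie in $\mathcal{W}$, so neither the energy identity nor the statement that $\nu^{v_K}$ lives on $\{v_K=1\}$ can be applied verbatim. I would justify both by regularisation, using the penalised Mignot--Puel approximations $u_\epsilon\in\mathcal{W}$ converging increasingly to $v_K$ together with Lemma \ref{convergemeas} and Proposition \ref{approximation} to pass to the limit in the integrals against $\nu^{v_K}$. Once $cap\,K\leq C\|v\|^2_{\mathcal{K}}$ holds uniformly in $K$, taking the supremum over compact $K\subset\vartheta$ gives $cap\,\vartheta\leq C\|v\|^2_{\mathcal{K}}$.
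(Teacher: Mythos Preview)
The paper does not give its own proof of this lemma; it simply records it as ``proved in \cite{PIERRE}, Lemma 3.3'' and uses it as a black box. So there is no in-paper argument to compare against, and your outline must stand on its own.

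The reduction to compact $K$, the membership $v\in\mathscr{S}_K$, the use of $\nu^{v_K}$ being concentrated on $\{v_K=1\}$, and the energy identity $\int v_K\,d\nu^{v_K}\leq C\|v_K\|_{\mathcal{K}}^2$ are all reasonable (with the regularization you mention). The genuine gap is your resolution of what you correctly call ``the crux'': you claim that $v_K$ minimizes the $\mathcal{K}$-norm (equivalently the parabolic energy) over $\mathscr{S}_K$. Nothing in the framework recalled here supports that. Proposition~2 (Proposition~2.1 in \cite{PIERRE}) says only that $v_K$ is the \emph{smallest} element of $\mathscr{S}_K$ --- giving the pointwise bound $0\leq v_K\leq v$ and hence the $L^\infty_tL^2_x$ estimate --- and that $v_K$ minimizes the \emph{total mass} $\int d\nu^{\cdot}$. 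Neither statement gives control of $\int_0^T\mathcal{E}(v_K)\,dt$ by $\|v\|_{\mathcal{K}}^2$. You have identified the hard step and then disposed of it by asserting the very comparability (mass-capacity $\sim$ energy-capacity) that the lemma is meant to establish; as written the argument is circular.

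A route that avoids this circularity is to bypass the energy of $v_K$ altogether: test the measure $\nu^{v_K}$ against $v$ (not $v_K$), using that $v\geq 1$ q.e.\ on $K\supset\mathrm{supp}\,\nu^{v_K}$ to get $cap\,K\leq\int \tilde v\,d\nu^{v_K}$, and then bound this pairing by a mixed bilinear estimate in $\|v\|_{\mathcal{K}}$ and $\|v_K\|_{\mathcal{K}}$ via Proposition~\ref{approximation}; combining with the energy identity for $v_K$ (which relates $\|v_K\|_{\mathcal{K}}^2$ back to $cap\,K$) closes the loop without assuming the energy minimality of $v_K$. This is essentially how Pierre proceeds, and it is the missing idea in your sketch.
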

Let $\kappa=\kappa(u,u^+(0))$ be defined by relation \eqref{kappa}. One has to note that $\kappa$ is a random function. From now on, we always take for $\kappa$ the following measurable version
$$\kappa =\sup_n v^n,$$
where  $(v^n)$ is the non-decreasing sequence of random functions given by
\begin{equation} \label{eq:1}
\left\{ \begin{split}
         &\frac{\partial v_t^n}{\partial t}=\cA v_t^n+n(v_t^n-u_t)^-,\\
         &\frac{\partial v^n}{\partial v_L}=0,\ v^n_0=u^+(0).
                          \end{split} \right.
                          \end{equation}
Using the results recalled in Subsection \ref{capacity}, we know that for almost all $w\in\Omega$, $v^n (w)$ converges weakly to $v(w)=\kappa(u(w),u^+(0)(w))$ in
$L^2(0,T;H^1(\mathcal{O}))$ and that $v\geq u$.
\begin{lemma}\label{estimoftau} We have the following estimate:
\begin{eqnarray*}E\|\kappa\|_{\mathcal{K}}^2 \, \leq C\left(E\| u_0^+\|^2+E\| u_0\|^2+E\int_0^T\| f^0_t\|^2+\||g^0_t|\|^2+\||h^0_t|\|^2+\|l^0_t\|_{L^2(\partial \cO)}^2dt\right),\end{eqnarray*}
where $C$ is a constant depending only on the structure constants of the equation.
\end{lemma}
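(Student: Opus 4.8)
The plan is to bound $\|\kappa\|_{\mathcal{K}}$ uniformly in the penalization index $n$ along the sequence $(v^n)$ of \eqref{eq:1}, and then to pass to the limit. Recall from the discussion following \eqref{eq:1} that $\kappa=\sup_n v^n$, that $(v^n)$ is non-decreasing, and that $v^n$ converges weakly to $\kappa$ in $L^2(0,T;H^1(\mathcal{O}))$ for almost every $\omega$. Both pieces of the $\mathcal{K}$-norm are lower semicontinuous for these convergences (the $L^\infty(0,T;L^2(\mathcal{O}))$ part by monotone convergence, the $L^2(0,T;H^1(\mathcal{O}))$ part by weak lower semicontinuity), so $\|\kappa\|_{\mathcal{K}}\le\liminf_n\|v^n\|_{\mathcal{K}}$ pathwise, whence by Fatou's lemma $E\|\kappa\|_{\mathcal{K}}^2\le\liminf_n E\|v^n\|_{\mathcal{K}}^2$. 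It therefore suffices to bound $E\|v^n\|_{\mathcal{K}}^2$ by the asserted right-hand side with a constant independent of $n$.

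The decisive device is to estimate the difference $w^n:=v^n-u$ rather than $v^n$ itself. Tested against $w^n$ the penalization term becomes $n\big((w^n)^-,w^n\big)=-n\|(w^n)^-\|^2\le0$, hence dissipative, whereas tested against $v^n$ it has no useful sign and its factor $n$ cannot be controlled. From \eqref{eq:1} and the equation \eqref{spde_linear_neumann} for $u$, the process $w^n$ solves an equation of the same type, with divergence term $-g$, zero-order drift $n(w^n)^- - f$ (treated as a fixed $L^2$ source once $w^n$ is determined), noise coefficient $-h$, boundary datum $-l$ (using $\tfrac{\partial v^n}{\partial v_L}=0$ together with the boundary condition for $u$), and initial value $w^n_0=u_0^+-\xi$, whose norm is dominated by the stated initial data. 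Applying the It\^o formula of Lemma \ref{Ito} to $\Phi(y)=y^2$ (exactly as in the Remark that yields \eqref{estimate:u}, after the usual truncation of $\Phi$) and discarding the non-positive term $-2n\int_0^t\|(w^n_s)^-\|^2\,ds$ produces an energy inequality for $\|w^n_t\|^2+2\int_0^t\mathcal{E}(w^n_s)\,ds$ whose right-hand side contains $\|w^n_0\|^2$, interior terms in $f,g$ evaluated along $u$, the quadratic-variation term $\sum_j\int_0^t\|h^j_s\|^2\,ds$, the martingale $-2\sum_j\int_0^t(w^n_s,h^j_s)\,dB^j_s$, and the boundary contribution of the It\^o formula, which for $\Phi(y)=y^2$ is linear in the trace of $w^n_s$ against $l_s$ over $\partial\mathcal{O}$.

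The terms are then estimated routinely. The interior drift and divergence terms are bounded by Young's inequality, the resulting $\|\nabla w^n_s\|^2$ being absorbed into $2\mathcal{E}(w^n_s)\ge2\lambda\|\nabla w^n_s\|^2$ with a small parameter; the stochastic integral has zero expectation, and its contribution to the supremum in time is handled by the Burkholder--Davis--Gundy inequality, the ensuing $\tfrac12 E\sup_s\|w^n_s\|^2$ being absorbed on the left. Since $f,g,h,l$ are taken along $u$, the Lipschitz bounds of Assumption \textbf{(H)} dominate them by $f^0,g^0,h^0,l^0$ plus multiples of $\|u_s\|$ and $\|\nabla u_s\|$, and these $u$-dependent quantities are finally controlled by the a priori estimate \eqref{estimate:u}, which is where the contraction condition \textbf{(H5)} enters. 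The genuinely new term, and the main obstacle relative to the Dirichlet case, is the boundary integral: I would bound it by $2\int_0^t\|Tr\|\,\|w^n_s\|_{H^1(\mathcal{O})}\,\|l_s\|_{L^2(\partial\mathcal{O})}\,ds$ using the continuity of the trace operator (part (2) of the theorem recalled in Section 2), then apply Young's inequality, absorbing the $\|\nabla w^n_s\|^2$ part into $2\mathcal{E}(w^n_s)$ and passing the $\|w^n_s\|^2$ part into Gronwall, while $\|l_s\|_{L^2(\partial\mathcal{O})}^2\le 2\|l^0_s\|^2+2\theta^2\|Tr\|^2\|u_s\|_{H^1(\mathcal{O})}^2$ is again absorbed through \eqref{estimate:u}.

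Collecting these bounds and applying Gronwall's lemma gives $E\|w^n\|_{\mathcal{K}}^2\le C\,(\text{data})$ uniformly in $n$; combined with the estimate \eqref{estimate:u} for $E\|u\|_{\mathcal{K}}^2$ and the triangle inequality $\|v^n\|_{\mathcal{K}}\le\|w^n\|_{\mathcal{K}}+\|u\|_{\mathcal{K}}$, this yields the required $n$-independent bound on $E\|v^n\|_{\mathcal{K}}^2$, and the limit passage of the first paragraph finishes the proof. I expect the only delicate points to be the sign bookkeeping that renders the penalization dissipative when one works with $w^n=v^n-u$, and the trace estimate of the boundary integral; the remainder is standard energy analysis.
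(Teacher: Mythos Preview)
Your proposal is correct and follows essentially the same route as the paper: work with $w^n=v^n-u$, apply It\^o's formula of Lemma~\ref{Ito} to $(w^n)^2$, exploit the non-positivity of the penalization term $2n\big((w^n)^-,w^n\big)$, control the boundary integral via the trace inequality, invoke the Lipschitz hypotheses and the a~priori estimate \eqref{estimate:u} for $u$, and close with Gronwall and BDG.

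The only organizational difference is in the limit passage. You derive the full bound on $E\|w^n\|_{\mathcal K}^2$ (including the $E\sup_t$ part via BDG) uniformly in $n$ and then pass to the limit once by lower semicontinuity and Fatou. The paper instead first obtains a pointwise-in-$t$ bound on $E\|v^n_t-u_t\|^2$, passes to the limit in the It\^o identity itself (using the strong $L^2$ convergence of $v^n$ to $\kappa$ to handle the stochastic integrals), and only then applies BDG to the limiting inequality for $\kappa-u$. Your ordering is slightly cleaner; the paper's has the mild advantage of producing the It\^o-type inequality for $\kappa-u$ as a by-product. Both are valid and the estimates are identical.
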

\begin{proof} All along this proof, we shall denote by $C$ or $C_{\epsilon} $ some constant which may change from line to line.


Consider the approximation $(v^n )_n$ defined by \eqref{eq:1},
$P$-almost surely, it converges weakly to $v=\kappa(u,u^+(0))$ in
$L^2(0,T;H^1(\mathcal{O}))$. We remark that $v^n-u$ satisfies
the following
equation:
\begin{equation*}
\left \{
\begin{aligned}d(v^n_t-u_t)-\cA(v_t^n-u_t)dt=&-f_tdt-\sum_{i=1}^d\partial_ig^i_tdt-\sum_{j=1}^{+\infty}h^j_tdB^j_t+n(v_t^n-u_t)^-dt,\\
\sum_{i,j}(a_{i,j}\partial_j(v^n_t-u_t)-g^i_t)n_i=&-l_t.\end{aligned}
\right.
\end{equation*}
Applying It\^o's
formula (Lemma \ref{Ito}) to $(v^n-u)^2$, we have almost surely, for all $t\in[0,T]$,
\begin{equation}\label{original}
\begin{split}&\| v_t^n-u_t\|^2+2\int_0^t\mathcal{E}(v_s^n-u_s)ds=\| u^+_0-u_0\|^2-2\int_0^t(v_s^n-u_s,f_s)ds\\&+2\sum_{i=1}^d\int_0^t(\partial_i(v_s^n-u_s),g^i_s)ds+\int_0^t\| |h_s|\|^2ds-2\int_0^t(l_s,v_s^n-u_s)_{L^2(\partial \cO)}ds\\&-2\sum_{j=1}^{+\infty}\int_0^t(v_s^n-u_s,h^j_s)dB^j_s+2\int_0^t(n(v_s^n-u_s)^-,v_s^n-u_s)ds.\end{split}\end{equation}
The last term in the right member of (\ref{original}) is obviously non-positive
and noting that
$$(l_s,v_s^n-u_s)_{L^2(\partial \cO)}\le \|l_s\|_{L^2(\partial \cO)}\|v_s^n-u_s\|_{L^2(\partial \cO)}\le\|Tr\|\|l_s\|_{L^2(\partial \cO)}\|v_s^n-u_s\|_{H^1(\cO)},$$
then, taking expectation and using Cauchy-Schwartz's inequality, we get
\begin{equation*}\begin{split}
&E\| v_t^n-u_t\|^2+(2-\frac{\epsilon}{\lambda})E\int_0^t\mathcal{E}(v_s^n-u_s)ds\\
\leq&\, E\| u_0^+-u_0\|^2+CE\int_0^t\| v_s^n-u_s\|^2ds+C_{\epsilon}E\int_0^t\|l(s,u_s)\|_{L^2(\partial \cO)}^2ds
\\+&\,E\int_0^t\| f_s(u_s,\nabla u_s)\|^2ds+C_\epsilon E\int_0^t\| |g_s(u_s,\nabla u_s)|\|^2ds+E\int_0^t\| |h_s(u_s,\nabla u_s)|\|^2ds.
\end{split}\end{equation*}
Therefore, by using the Lipschitz conditions on the coefficients, we have
\begin{equation*}\begin{split}&E\| v_t^n-u_t\|^2+(2-\frac{\epsilon}{\lambda})E\int_0^t\mathcal{E}(v_s^n-u_s)ds\\
\leq&\, E\| u_0^+-u_0\|^2+CE\int_0^t\| v_s^n-u_s\|^2ds+C\Big(E\int_0^t\| u_s\|^2ds+E\int_0^t\mathcal{E}(u_s)ds\Big)
\\+&\,CE\int_0^t(\| f_s^0\|^2+\| |g_s^0|\|^2+\| |h_s^0|\|^2+\|l_s^0\|_{L^2(\partial \cO)}^2)ds\,.\end{split}\end{equation*}
Combining with (\ref{estimate:u}), this yields
\begin{equation*}
\begin{split}
E\| v_t^n-u_t\|^2&+(2-\frac{\epsilon}{\lambda})E\int_0^t\mathcal{E}(v_s^n-u_s)ds\leq E\| u_0^+-u_0\|^2+CE\int_0^t\| v_s^n-u_s\|^2ds\\&+\ CE\Big(\| u_0\|^2+\int_0^T(\|f_t^0\|^2+\| |g_t^0|\|^2+\||h_t^0|\|^2+\|l_s^0\|_{L^2(\partial \cO)}^2)dt\Big).
\end{split}
\end{equation*}
We  take now  $\epsilon$ small enough such that $(2-\frac{\epsilon}{\lambda})>0$, then, with
Gronwall's lemma, we obtain for each $t\in [0,T]$
\begin{equation*}\begin{split}E\| v_t^n-u_t\|^2\leq\,Ce^{c'T}\Big(E\| u_0^+&-u_0\|^2+E\| u_0\|^2
\\&+\,E\int_0^T\| f_t^0\|^2+\| |g_t^0|\|^2+\| |h_t^0|\|^2+\|l_t^0\|_{L^2(\partial \cO)}^2dt\Big).\end{split}\end{equation*}
As we a priori know that $P$-almost surely, $(v^n)_n$ tends to $\kappa$ strongly in $L^2 ([0,T]\times \cO)$, the previous estimate yields, thanks to the dominated convergence theorem,  that $(v^n )_n$ converges to
$\kappa$ strongly in $L^2 (\Omega \times [0,T]\times \cO)$ and
\begin{equation*}
\begin{split}
\sup_{t\in [0,T]}E\| \kappa_t-u_t\|^2\leq\,Ce^{c'T}\Big(E\| u_0^+&-u_0\|^2+E\| u_0\|^2\\&+\,E\int_0^T\| f_t^0\|^2+\| |g_t^0|\|^2+\| |h_t^0|\|^2+\|l_t^0\|_{L^2(\partial \cO)}^2dt\Big).
\end{split}\end{equation*}
Moreover, as $(v^n)_n$ tends to $\kappa$ weakly in $L^2 ([0,T]; H^1 (\cO ))$ $P$-a.s., we have for all $t\in [0,T]$:
\begin{equation*}
\begin{split}&E \int_0^T \mathcal{E}(\kappa_s -u_s)ds\leq\liminf_n E\int_0^T \mathcal{E}(v_s^n-u_s)ds\\ &\leq\,TCe^{c'T}\Big(E\| u_0^+-u_0\|^2+E\| u_0\|^2+E\int_0^T\| f_t^0\|^2+\| |g_t^0|\|^2+\| |h_t^0|\|^2+\|l_t^0\|_{L^2(\partial \cO)}^2dt\Big).\end{split}\end{equation*}
Let us now study the stochastic term. Define the martingales
$$M^n_t =\sum_{j=1}^{+\infty}\int_0^t(v_s^n-u_s,h^j_s)dB^j_s\ \makebox{ and }\ M_t=\sum_{j=1}^{+\infty}\int_0^t(\kappa_s-u_s,h^j_s)dB^j_s .$$
Then
\begin{eqnarray*}
E[|M^n_T -M_T|^2]=E\int_0^T \sum_{j=1}^{+\infty}(\kappa_s -v^n_s
,h_s )^2 \, ds\leq
E\int_0^T\|\kappa_s-v^n_s\|^2\|
|h_s|\|^2ds.
\end{eqnarray*}
Using the strong convergence of $(v^n )$ to $\kappa$ we conclude that $(M^n)$ tends to $M$ in $L^2$ sense.
Passing to the limit, we get, almost surely, for all $t\in[0,T]$,
\begin{equation*}
\begin{split}\|\kappa_t-u_t\|^2+2\int_0^t\mathcal{E}(\kappa_s -u_s)ds&\leq\| u^+_0-u_0\|^2-2\int_0^t(\kappa_s-u_s,f_s)ds\\&+2\sum_{i=1}^d\int_0^t(\partial_i(\kappa_s-u_s),g^i_s)ds-2\int_0^t(\kappa_s-u_s,l_s)_{L^2(\partial \cO)}ds\\&-
2\sum_{j=1}^{+\infty}\int_0^t(\kappa_s-u_s,h^j_s)dB^j_s+\int_0^t\| |h_s|\|^2ds.\end{split}\end{equation*}
As a consequence of Burkholder-Davies-Gundy's inequalities, we get
\begin{equation*}
\begin{split}
E\sup_{t\in[0,T]}|\sum_{j=1}^{+\infty}\int_0^t(\kappa_s-u_s,h^j_s) dB^j_s|&\leq
CE[\int_0^T\sum_{j=1}^{+\infty}(\kappa_s-u_s,h^j_s)^2ds]^{1/2}\\&\leq CE[\sup_{t\in[0,T]}\|
\kappa_t-u_t\|(\int_0^T\| |h_t |\|^2dt)^{1/2}]\\&\leq\epsilon E\sup_{t\in[0,T]}\|
\kappa_t -u_t\|^2+C_\epsilon E\int_0^T\| |h_t |\|^2dt.
\end{split}
\end{equation*}
By Lipschitz conditions
on $h$ and (\ref{estimate:u}) this yields
\begin{equation*}
\begin{split}
E\sup_{t\in[0,T]}|\sum_{j=1}^{+\infty}\int_0^t(\kappa_s-u_s,h_s)dB_s|&\leq\epsilon
E\sup_{t\in[0,T]}\| \kappa_t-u_t\|\|^2+C\Big(E\|
u_0\|^2\\&+E\int_0^T\big(\| f_t^0\|^2+\| |g_t^0|\|^2+\| |h_t^0|\|^2+\|l_t^0\|_{L^2(\partial \cO)}^2\big)dt\Big).
\end{split}
\end{equation*}
Hence,
\begin{equation*}
\begin{split}
(1-\epsilon)E\sup_{t\in[0,T]}\| \kappa_t&-u_t\|^2+(2-\frac{\epsilon}{\lambda})E\int_0^T\mathcal{E}(\kappa_t-u_t)dt\\
&\leq\,C\Big(E\| u_0^+-u_0\|^2+E\int_0^T\big(\| f_t^0\|^2+\| |g_t^0|\|^2+\| |h_t^0|\|^2+\|l_t^0\|_{L^2(\partial \cO)}^2\big)dt\Big).
\end{split}
\end{equation*}
 We can take $\epsilon$ small
enough such that $1-\epsilon>0$ and $2-\frac{\epsilon}{\lambda}>0$,
hence, \begin{equation*}\begin{split}
E\sup_{t\in[0,T]}\| \kappa_t
&-u_t\|^2+E\int_0^T\mathcal{E}(\kappa_t-u_t)dt\\
\leq&C\Big(E\| u_0^+-u_0\|^2+E\|
u_0\|^2+E\int_0^T\big(\| f_t^0\|^2+\| |g_t^0|\|^2+\| |h_t^0|\|^2+\|l_t^0\|_{L^2(\partial \cO)}^2\big)dt\Big).
\end{split}\end{equation*}
Then, combining with
(\ref{estimate:u}), we get the desired estimate:
\begin{equation*}
\begin{split}
E\sup_{t\in[0,T]}\| \kappa_t\|^2&+E\int_0^T\mathcal{E}(\kappa_t)dt\\ \leq&\,C\Big(E\| u_0^+\|^2
+E\int_0^T\big(\| f_t^0\|^2+\| |g_t^0|\|^2+\| |h_t^0|\|^2+\|l_t^0\|_{L^2(\partial \cO)}^2\big)dt\Big).
\end{split}
\end{equation*}
\end{proof}

\textbf{Proof of Theorem \ref{mainquasicontinuity}:}
Let $(u_0^n)_n$, $(f^n )_n$, $(g^n )_n$, $(h^n )_n$  and $(l^n)_n$ are
sequences of  smooth elements in $L^2(\Omega\times[0,T];L^2(\cO))$ and $L^2(\Omega\times[0,T];L^2(\partial \cO))$ respectively
which converge respectively to $u_0$, $f$, $g$, $h$ and $l$ and $h=\sum_k h_k1_{[t_k,t_{k+1})}$ with $h_k$ being $\cF_{t_k}$-measurable. For all $n\in\mathbb{N}^*$
we define $u^n=\cU(u_0^n,f^n,g^n,h^n,l^n)$. From Section \ref{linearspdeneumann}, we know that $u^n$ is $P$-almost surely continuous in $(t,x)$. Then we can do a similar argument as in \cite{DMZ12} to end the proof of Theorem \ref{mainquasicontinuity}.   $\hspace{12.8cm}\Box$

\section{Existence and uniqueness result}
\subsection{Weak solution}
\textbf{Assumption (O):} The obstacle $S$ is assumed to be an adapted process, quasi-continuous, such that $S_0 \leq \xi$ $P$-almost surely and
controlled by the solution of a SPDE, i.e. $\forall t\in[0,T]$,
\begin{equation}S_t\leq S'_t\end{equation} where
$S'$ is the solution of the linear SPDE
\begin{equation}\label{obstacle}
\left\{ \begin{split}
 &dS'_t=\cA S'_tdt+f'_tdt+\sum_{i=1}^d \partial_i g'_{i,t}dt+\sum_{j=1}^{+\infty}h'_{j,t}dB^j_t,\\
 &\sum_{i,j} (a_{i,j}\partial_jS'_t(x)+g'_i)n_i=l'(t,x),\ \  S'(0)=S'_0,
\end{split}\right. \end{equation}
where $S'_0\in L^2 (\Omega\times \cO)$ is
$\mathcal{F}_0$-measurable, $f'$, $g'$, $h'$ and $l'$ are adapted
processes respectively in $L^2
([0,T]\times\Omega\times\cO;\mathbb{R})$,  $L^2
([0,T]\times\Omega\times\cO;\mathbb{R}^d)$, $L^2
([0,T]\times\Omega\times\cO;\mathbb{R}^{\mathbb{N}^*})$ and $L^2
([0,T]\times\Omega\times \partial \cO;\mathbb{R})$.
\begin{remark}Here again, we know that $S'$ uniquely exists and satisfies the following estimate:
\begin{equation}\label{estimobstacle}E\sup_{t\in[0,T]}\| S'_t\|^2+E\int_0^T\mathcal{E}(S'_t)dt\leq CE\left[\| S'_0\|^2+\int_0^T(\| f'_t\|^2+\| |g'_t|\|^2+\| |h'_t|\|^2+\|l'_t\|_{L^2(\partial \cO)}^2)dt\right].\end{equation}
Moreover, from Theorem \ref{mainquasicontinuity}, $S'$ admits a
quasi-continuous version. \\
Let us also remark that even if this assumption seems restrictive, since $S'$ is driven by the same operator and Brownian motions as $u$, it encompasses
a large class of examples. \end{remark}
We now are able to define rigorously the notion of solution to the problem with obstacle we consider.
\begin{definition} A pair
$(u,\nu)$ is said to be a solution of OSPDE \eqref{SPDEO} if
\begin{enumerate}
    \item $u\in\mathcal{H}_T$ and $u(t,x)\geq S(t,x),\ dP\otimes dt\otimes
    dx-a.e.$ and $u_0(x)=\xi,\ dP\otimes dx-a.e.$;
    \item $\nu$ is a random regular measure defined on
    $[0,T)\times\bar{\mathcal{O}}$;
    \item the following relation holds almost surely, for all
    $t\in[0,T]$ and $\forall\varphi\in\mathcal{D}$,
     \begin{equation}\label{solution}
     \begin{split}&\hspace{-0.3cm}(u_t,\varphi_t)-(\xi,\varphi_0)-\int_0^t(u_s,\partial_s\varphi_s)ds+\int_0^t\mathcal{E}(u_s,\varphi_s)ds+\sum_{i=1}^d\int_0^t(g^i_s,\partial_i\varphi_s)ds\nonumber\\
    &\hspace{-0.6cm}=\int_0^t(f_s,\varphi_s)ds+\int_0^t\int_{\partial\mathcal{O}}l_s(x)\varphi_s(x)d\sigma(x)ds+\sum_{j=1}^{+\infty}\int_0^t(h^j_s,\varphi_s)dB^j_s+\int_0^t\int_{\bar{\mathcal{O}}}\varphi_s(x)\nu(dx,ds);
    \end{split}\end{equation}
    \item $u$ admits a quasi-continuous version, $\tilde{u}$, and we have  $$\int_0^T\int_{\bar{\mathcal{O}}}(\tilde{u}(s,x)-{S}(s,x))\nu(dx,ds)=0,\ \
    a.s..\\$$
  \end{enumerate}
\end{definition}

In order to apply the penalized method to obstacle problem \eqref{SPDEO}, we need the following comparison theorem for the solution of \eqref{spde_linear_neumann}, which can be easily obtained by It\^o's formula.
\begin{theorem}
Let $\xi',f',l'$ be similar to $\xi,f,l$ and let $u$ be the solution of \eqref{spde_linear_neumann} corresponding to $(\xi,f,g,h,l)$ and $u'$ be the solution corresponding to $(\xi',f',g,h,l')$. Assume that the following conditions hold:
\begin{enumerate}
\item $\xi\leq\xi',\ dx\otimes dP-a.e.$;
\item $f(u,\nabla u)\leq f'(u,\nabla u),\ dt\otimes dx\otimes dP-a.e.$;
\item $l(u)\leq l'(u),\ dx\otimes dP-a.e.$.
\end{enumerate}
Then, for all most all $\omega$, $u(t,x)\leq u'(t,x),\ q.e.$.
\end{theorem}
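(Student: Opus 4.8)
The plan is to prove the comparison result via the penalization/It\^o technique, estimating the negative part $(u-u')^+$ and showing it vanishes. First I would set $w:=u-u'$ and write down the SPDE it satisfies. Since $u$ and $u'$ share the same coefficients $g$ and $h$ (as functions of the unknown and its gradient) but differ in $\xi$, $f$ and $l$, the equation for $w$ reads, formally,
\begin{equation*}
dw_t=\big\{\cA w_t+\partial_i\big(g^i(u_t,\nabla u_t)-g^i(u_t',\nabla u_t')\big)+\big(f(u_t)-f'(u_t')\big)\big\}dt+\big(h^j(u_t)-h^j(u_t')\big)dB^j_t,
\end{equation*}
with Neumann datum $l(u_t)-l'(u_t')$ on $\partial\cO$ and initial condition $\xi-\xi'\le 0$. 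The goal is to show $w^+=0$, i.e. $(u-u')^+=0$, $dP\otimes dt\otimes dx$-a.e., and then upgrade this to the q.e. statement using the quasi-continuous versions.

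The key computational step is to apply It\^o's formula (Lemma~\ref{Ito}) with a smooth convex approximation $\Phi$ of $y\mapsto((y)^+)^2$, or to apply it directly to $\Phi(w)=((w)^+)^2$ after a standard mollification, obtaining an identity for $\|(w_t)^+\|^2$. I would then split the right-hand side into the drift, boundary, and It\^o-correction terms and estimate each on the set $\{w>0\}=\{u>u'\}$. On this set, the assumptions $\xi\le\xi'$, $f(u,\nabla u)\le f'(u,\nabla u)$ and $l(u)\le l'(u)$ feed in as sign conditions: the initial term is nonpositive, and by monotonicity of $f'$ and $l'$ in the $y$-variable (via the Lipschitz bounds in Assumption~(\textbf{H})) the differences $f(u_t)-f'(u_t')$ and $l(u_t)-l'(u_t')$ are controlled by $C(w_t)^+$ plus a nonpositive contribution. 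The $g$ and $h$ terms, being differences of the same Lipschitz function evaluated at $(u,\nabla u)$ versus $(u',\nabla u')$, are bounded using conditions (2.)--(3.) of Assumption~(\textbf{H}) by $C(w_t)^++\alpha|\nabla w_t|$ and $C(w_t)^++\beta|\nabla w_t|$ respectively; these gradient terms are absorbed into the ellipticity term $2\lambda\int_0^t\cE((w_s)^+)\,ds$, while the boundary term is handled exactly as in Lemma~\ref{estimoftau} through the trace estimate $\|v\|_{L^2(\partial\cO)}\le\|Tr\|\,\|v\|_{H^1(\cO)}$.

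The main obstacle I anticipate is the simultaneous control of all the ``bad'' terms so that the coefficient of $\cE((w_s)^+)$ stays strictly positive: this is precisely where the strengthened contraction property (5.) of Assumption~(\textbf{H}), namely $2\alpha+\beta^2+2\|Tr\|^2\theta<2\lambda$, is needed, since the Neumann/trace term contributes the extra $2\|Tr\|^2\theta$ compared with the Dirichlet case. After taking expectations (so the martingale term drops) and absorbing the gradient contributions, I would arrive at an inequality of the form
\begin{equation*}
E\|(w_t)^+\|^2+\delta\,E\int_0^t\cE((w_s)^+)\,ds\le C\,E\int_0^t\|(w_s)^+\|^2\,ds
\end{equation*}
for some $\delta>0$, and then Gronwall's lemma forces $E\|(w_t)^+\|^2=0$ for every $t$, hence $u\le u'$, $dP\otimes dt\otimes dx$-a.e. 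Finally, to obtain the pointwise q.e. conclusion I would pass to the quasi-continuous versions $\tilde u,\tilde u'$ guaranteed by Theorem~\ref{mainquasicontinuity}: since $(u-u')^+\in\cP$ (or is dominated by an element of $\cP$) and vanishes a.e., Proposition~\ref{Versiont} and the fact that regular measures do not charge polar sets let me conclude that $\tilde u\le\tilde u'$ quasi-everywhere, for almost all $\omega$.
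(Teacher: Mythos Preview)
Your proposal is correct and follows precisely the approach the paper indicates: the paper does not spell out a proof but simply states that the result ``can be easily obtained by It\^o's formula'', and your plan---apply Lemma~\ref{Ito} to $((u-u')^+)^2$, exploit the sign assumptions together with the Lipschitz bounds and the contraction condition $2\alpha+\beta^2+2\|Tr\|^2\theta<2\lambda$ to absorb the gradient and trace contributions, then conclude via Gronwall and pass to quasi-continuous versions---is exactly the intended elaboration. The only slips are cosmetic: you wrote ``negative part $(u-u')^+$'' where you meant positive part, and the q.e.\ upgrade follows directly from the fact that a quasi-continuous function which is $\le 0$ a.e.\ is $\le 0$ q.e., rather than from any claim that $(u-u')^+\in\cP$.
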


As we have It\^o's formula and comparison theorem for the solution of SPDE \eqref{SPDEO} without obstacle, we can make a similar argument as Section 5 in \cite{DMZ12}. More precisely, we first establish the result in the linear case. Then, we prove an It\^o formula for the difference of two solutions of (linear) OSPDEs and finally conclude the following result thanks to a Picard iteration procedure:
\begin{theorem}{\label{maintheo}}
Under assumptions {\bf (H)}, {\bf (I)} and {\bf (O)}, there exists a unique weak solution of OSPDE \eqref{SPDEO} associated to $(\xi,f,g,h,l,S)$.
We denote by $\mathcal{R}(\xi,f,g,h,l,S)$ the solution of OSPDE \eqref{SPDEO} when it exists and is unique.
\end{theorem}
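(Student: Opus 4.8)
The plan is to carry out the three-step programme outlined just after the statement: solve the \emph{linear} obstacle problem by penalization, establish an It\^o formula for the difference of two solutions, and then treat the nonlinear equation by a Picard fixed-point argument. Throughout, the parabolic potential analysis of Section~\ref{capacity} supplies the notion of random regular measure and the convergence tools. First I would fix $(\xi,f,g,h,l)$ as \emph{given} predictable processes (independent of $u,\nabla u$) and solve the associated linear OSPDE. For each $n\in\mathbb{N}^*$ introduce the penalized SPDE
$$du^n_t=\big\{\cA u^n_t+\partial_i g^i_t+f_t+n(u^n_t-S_t)^-\big\}dt+h^j_tdB^j_t,\qquad \sum_{i,j}(a_{i,j}\partial_j u^n_t+g^i_t)n_i=l_t,\quad u^n_0=\xi.$$
This is of the form \eqref{spde_linear_neumann} with an extra Lipschitz zeroth-order term, so it has a unique solution $u^n\in\mathcal{H}_T$, and the comparison theorem shows that $n\mapsto u^n$ is nondecreasing. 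The key a priori bound comes from applying the It\^o formula of Lemma~\ref{Ito} to $\|u^n_t-S'_t\|^2$, i.e. subtracting the dominating process $S'$ of Assumption~(O): writing $u^n-S'=(u^n-S)-(S'-S)$ and using $S'-S\ge0$, the penalization contribution $2(u^n_t-S'_t,\,n(u^n_t-S_t)^-)$ is nonpositive, so it can be dropped. Combining with \eqref{estimate:u} and the bound \eqref{estimobstacle} on $S'$ yields a uniform estimate on $\|u^n\|_{\mathcal{H}_T}$; a uniform bound on the mass $E\int_0^T\int_{\bar\cO}d\nu^n$ of $d\nu^n:=n(u^n-S)^-\,dt\,dx$ then follows along the lines of Lemma~\ref{estimoftau}, which controls the $\mathcal{K}$-norm of the associated potential.

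Next I would pass to the limit $n\to\infty$. The monotone, $\mathcal{H}_T$-bounded family $u^n$ converges (increasingly and weakly in $L^2(0,T;H^1(\cO))$ pathwise, strongly in $L^2(\Omega\times[0,T]\times\cO)$) to some $u$, while $\nu^n$ converges to a limit $\nu$ which, by the potential structure and Proposition~\ref{presentation}, is a random regular measure on $[0,T[\times\bar\cO$. By Theorem~\ref{mainquasicontinuity} and Lemma~\ref{cap}, $u$ admits a quasi-continuous version $\tilde u$, and the limit satisfies $\tilde u\ge S$ q.e. The delicate point is the minimal Skorokhod condition $\int_0^T\int_{\bar\cO}(\tilde u-S)\,d\nu=0$: one passes to the limit in $\int_0^T\int_{\bar\cO}\tilde u\,d\nu^n$ using Lemma~\ref{convergemeas}, while the penalization identity $\int_0^T\int_{\bar\cO}(u^n-S)\,d\nu^n\le0$ together with lower semicontinuity pins down the complementarity. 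This produces a solution $(u,\nu)$ of the linear OSPDE.

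The second ingredient is an It\^o formula for $\|u_t-u'_t\|^2$, where $(u,\nu)$ and $(u',\nu')$ solve two linear OSPDEs. Every term is produced by Lemma~\ref{Ito} except the measure contribution $\int_0^t\int_{\bar\cO}(\tilde u_s-\tilde u'_s)\,(d\nu-d\nu')$, whose sign is decided by the complementarity conditions: from $\tilde u\ge S$, $\tilde u'\ge S$, $\nu,\nu'\ge0$ and $\int(\tilde u-S)\,d\nu=\int(\tilde u'-S)\,d\nu'=0$ one reads off $\int(\tilde u-\tilde u')\,d\nu\le0$ and $\int(\tilde u-\tilde u')\,d\nu'\ge0$, so the measure term has the favourable sign in the energy identity. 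Making this rigorous requires the quasi-continuity together with Proposition~\ref{approximation} and Lemma~\ref{convergemeas}, since $u$ is only $H^1(\cO)$-valued and $\nu$ charges no polar set.

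Finally, the nonlinear equation is solved by a Picard iteration: given $v\in\mathcal{H}_T$, let $\Phi(v)$ be the unique solution of the linear OSPDE with frozen coefficients $f(\cdot,v,\nabla v)$, $g(\cdot,v,\nabla v)$, $h(\cdot,v,\nabla v)$, $l(\cdot,v)$ and obstacle $S$. Applying the difference formula to $\Phi(v)-\Phi(v')$ and using the Lipschitz bounds (H)(1)--(4), the martingale part contributes $\beta^2$, the divergence term $g$ contributes $2\alpha$, and the boundary term, estimated through the trace theorem $\|\cdot\|_{L^2(\partial\cO)}\le\|Tr\|\,\|\cdot\|_{H^1(\cO)}$ exactly as in Lemma~\ref{estimoftau}, contributes $2\|Tr\|^2\theta$; all three are to be absorbed into $2\mathcal{E}(\Phi(v)-\Phi(v'))\ge2\lambda\|\nabla(\Phi(v)-\Phi(v'))\|^2$. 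The strengthened contraction condition (H)(5), $2\alpha+\beta^2+2\|Tr\|^2\theta<2\lambda$, is precisely what makes $\Phi$ a strict contraction on $\mathcal{H}_T$ under an equivalent norm carrying an exponential weight $e^{-\mu t}$, and its fixed point is the unique solution. I expect the main obstacle to be the rigorous justification of the difference It\^o formula and the sign of the measure term, since the solutions are only $H^1(\cO)$-valued and the measures live on regular sets: this is where quasi-continuity and the no-charge-on-polar-sets property, via Proposition~\ref{approximation} and Lemma~\ref{convergemeas}, are indispensable, and where the Neumann boundary term forces the reinforced contraction condition.
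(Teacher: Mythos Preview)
Your proposal is correct and follows essentially the same three-step route the paper indicates (and that \cite{DMZ12} carries out in the Dirichlet case): penalization for the linear obstacle problem using the dominating process $S'$ from Assumption~(O) to kill the penalization term, an It\^o formula for the difference of two solutions in which the measure contribution has the good sign by complementarity, and a Picard contraction in $\mathcal{H}_T$ exploiting (H)(5). The only Neumann-specific point you identify---that the boundary term is controlled via the trace inequality and produces the extra $2\|Tr\|^2\theta$ in the contraction constant---is exactly the mechanism the paper relies on.
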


\subsection{It\^o's formula}
We can establish the following It\^o formula for the solution of OSPDE \eqref{SPDEO}. The proof is similar to that in \cite{DMZ12}.
\begin{theorem}\label{Itoformula}
Let $u$ be the solution of OSPDE \eqref{SPDEO} and
$\Phi:\mathbb{R}^+\times\mathbb{R}\rightarrow\mathbb{R}$ be a
function of class $\mathcal{C}^{1,2}$. We denote by $\Phi'$ and
$\Phi''$ the derivatives of $\Phi$ with respect to the space
variables and by $\frac{\partial\Phi}{\partial t}$ the partial
derivative with respect to time. We assume that these derivatives
are bounded. Then $P$-a.s. for
all $t\in[0,T]$,
\begin{equation*}\begin{split}
\int_\mathcal{O}\Phi(t,u_t(x))dx&+\int_0^t\mathcal{E}(\Phi'(s,u_s),u_s)ds=\int_\mathcal{O}\Phi(0,\xi(x))dx+\int_0^t\int_\mathcal{O}\frac{\partial\Phi}{\partial
s}(s,u_s(x))dxds\\&+\int_0^t(\Phi'(s,u_s),f_s)ds
-\sum_{i=1}^d\int_0^t\int_\mathcal{O}\Phi''(s,u_s(x))\partial_iu_s(x)g_i(x)dxds\\&+\sum_{j=1}^{+\infty}\int_0^t(\Phi'(s,u_s),h_j)dB_s^j
+\int_0^t\int_{\partial \cO}\Phi(s,u_s(x))l_s(x)d\sigma(x)ds\\&
+\frac{1}{2}\sum_{j=1}^{+\infty}\int_0^t\int_\mathcal{O}\Phi''(s,u_s(x))(h_{j,s}(x))^2dxds+\int_0^t\int_\mathcal{O}\Phi'(s,\tilde{u}_s(x))\nu(dxds).
\end{split}\end{equation*}
\end{theorem}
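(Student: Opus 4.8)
The plan is to derive the formula by a penalization argument: I transfer the Itô formula of Lemma \ref{Ito}, already available for the SPDE without obstacle, to the obstacle solution and then pass to the limit, the only genuinely new feature being the measure term $\int_0^t\int_{\mathcal{O}}\Phi'(s,\tilde u_s)\,\nu(dx\,ds)$. Concretely, I would approximate $u=\mathcal{R}(\xi,f,g,h,l,S)$ by the penalized solutions $u^n$ of the non-reflected SPDEs
\[
du_t^n=\big\{\mathcal{A}u_t^n+\partial_i g_i(t,u^n_t,\nabla u^n_t)+f(t,u^n_t,\nabla u^n_t)+n(u^n_t-S_t)^-\big\}dt+h_j(t,u^n_t,\nabla u^n_t)\,dB_t^j,
\]
with the corresponding Neumann condition and $u^n_0=\xi$; these are exactly the approximations underlying the construction of Theorem \ref{maintheo}. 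Freezing the coefficients along the fixed path $u^n$, each $u^n$ solves an equation of the form \eqref{spde_linear_neumann} with source $f(s,u^n_s,\nabla u^n_s)+n(u^n_s-S_s)^-$, so Lemma \ref{Ito} applies verbatim to $\Phi(t,u^n_t)$ and yields the desired identity with the last term replaced by the absolutely continuous penalization contribution $\int_0^t\big(\Phi'(s,u^n_s),\,n(u^n_s-S_s)^-\big)\,ds=\int_0^t\int_{\mathcal{O}}\Phi'(s,u^n_s(x))\,\nu^n(dx\,ds)$, where $\nu^n(dx\,ds):=n(u^n_s-S_s)^-\,dx\,ds$.

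Next I would collect the convergences furnished by the existence theory. From the uniform estimates of the type \eqref{estimate:u} and Lemma \ref{estimoftau}, the sequence $u^n$ is bounded in $\mathcal{H}_T$ and converges to $u$ strongly in $L^2(\Omega\times[0,T]\times\mathcal{O})$ and, crucially, with $\nabla u^n\to\nabla u$ strongly in $L^2$; the latter is essential because the left-hand side $\int_0^t\mathcal{E}(\Phi'(s,u^n_s),u^n_s)\,ds$ and the term $\int_0^t\int_{\mathcal{O}}\Phi''(s,u^n_s)\partial_i u^n_s g_i\,dx\,ds$ are quadratic in the gradient. Writing $v^n:=u^n-\bar u^n$ for the potential part, with $\bar u^n$ the non-reflected solution carrying the same (frozen) coefficients and the same inhomogeneous Neumann condition, the difference $v^n$ has zero Neumann boundary data and satisfies $\partial_t v^n-\mathcal{A}v^n=n(u^n-S)^-\ge0$, so $v^n\in\mathcal{P}$ with $\nu^{v^n}=\nu^n$; moreover $v^n$ is bounded in $\mathcal{K}$ and converges weakly in $L^2(0,T;H^1(\mathcal{O}))$ to the potential $v$ whose regular measure is $\nu^v=\nu$. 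With these in hand, the deterministic Lebesgue and boundary integrals pass to the limit by boundedness of $\Phi',\Phi''$, continuity of $\Phi',\Phi''$ and the Lipschitz property of the coefficients, while the martingale term converges in $L^2$ through the Burkholder--Davis--Gundy inequality exactly as in the proof of Lemma \ref{estimoftau}.

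The heart of the matter is the identification
\[
\int_0^t\int_{\mathcal{O}}\Phi'(s,u^n_s(x))\,\nu^n(dx\,ds)\;\longrightarrow\;\int_0^t\int_{\mathcal{O}}\Phi'(s,\tilde u_s(x))\,\nu(dx\,ds).
\]
The natural tool is the convergence Lemma \ref{convergemeas}: the integrand $\Phi'(s,\tilde u_s(x))$ is quasi-continuous, being the composition of the continuous function $\Phi'$ with the quasi-continuous version $\tilde u$ attached to the obstacle solution, and it is bounded, hence dominated by a (constant) element of $\mathcal{P}$; thus Lemma \ref{convergemeas} gives $\int\Phi'(s,\tilde u_s)\,d\nu^{v^n}\to\int\Phi'(s,\tilde u_s)\,d\nu$. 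What remains, and what I expect to be the main obstacle, is to absorb the discrepancy between the integrands $\Phi'(s,u^n_s)$ and $\Phi'(s,\tilde u_s)$ against the masses $\nu^n$, i.e. to prove
\[
\int_0^t\int_{\mathcal{O}}\big|\Phi'(s,u^n_s(x))-\Phi'(s,\tilde u_s(x))\big|\,\nu^n(dx\,ds)\longrightarrow 0.
\]
This cannot be read off from mere $L^2$ convergence, since $\nu^n$ is singular in the limit; it is precisely where the parabolic-potential machinery enters. Using the uniform bound on the total masses $\nu^n([0,T[\times\bar{\mathcal{O}})$ together with the capacity (quasi-everywhere) convergence $u^n\to\tilde u$ along a subsequence, one controls this error term, after which combining all the limits produces the stated formula $P$-a.s. for all $t\in[0,T]$.
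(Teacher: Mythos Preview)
Your proposal is correct and follows essentially the same route the paper adopts: the paper's own ``proof'' of Theorem~\ref{Itoformula} consists solely of a reference to \cite{DMZ12}, and the argument there is precisely the penalization scheme you describe --- apply Lemma~\ref{Ito} to $u^n$, use the strong convergence of $u^n$ in $\mathcal{H}_T$ for the non-measure terms, and identify the limit of $\int\Phi'(s,u^n_s)\,\nu^n(dx\,ds)$ via Lemma~\ref{convergemeas} together with a quasi-everywhere convergence estimate controlling $\int|\Phi'(s,u^n_s)-\Phi'(s,\tilde u_s)|\,\nu^n(dx\,ds)$. You have correctly singled out this last step as the genuine technical obstacle, and your outlined resolution (uniform mass bounds plus capacity convergence of $u^n$ to $\tilde u$) is exactly how \cite{DMZ12} handles it.
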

\subsection{Comparison theorem}
The above It\^o formula naturally leads to a comparison theorem, the proof is similar to the Dirichlet case (see Theorem 8 in \cite{DMZ12}). More precisely, consider $(u^1,\nu^1)=\mathcal{R} (\xi^1,f^1,g,h,l^1,S^1)$ the solution of the SPDE with obstacle
\begin{equation*}\left\{ \begin{split}&du^1_t(x)=\cA u^1_t(x)dt+f^1(t,x,u^1_t(x),\nabla
u^1_t(x))dt+\sum_{i=1}^d\partial_ig_i(t,x,u^1_t(x),\nabla
u^1_t(x))dt\\&\ \ \ \ \quad \quad+\sum_{j=1}^{+\infty}h_j(t,x,u^1_t(x),\nabla
u^1_t(x))dB^j_t +\nu^1(x, dt),\\
&\sum_{i,j}(a_{i,j}\partial_j u^1_t(x)+g^i_t(x))n_i=l^1(t,x),\\
&u^1\geq S^1\,,\  u^1_0=\xi^1,
\end{split}\right.\end{equation*}
where we assume $(\xi^1,f^1,g,h,l^1,S^1)$ satisfy hypotheses {\bf (H)}, {\bf (I)} and {\bf (O)}.

We consider another coefficient $f^2$ which satisfies the same assumptions as $f^1$, another obstacle $S^2$ which satisfies {\bf (O)}, another initial condition $\xi^2$ belonging to $L^2 (\Omega \times \cO)$ and $\mathcal{F}_0$ adapted such that $\xi^2\geq S^2_0$ and another boundary coefficient $l^2$ satisfying the same assumption as $l^1$. We denote by $(u^2,\nu^2 )=\mathcal{R}(\xi^2, f^2,g,h,l^2,S^2)$.
%
\begin{theorem} \label{comparison}Assume that the following conditions
hold\begin{enumerate}
      \item $\xi^1\leq\xi^2,\ dx\otimes dP-a.e.$;
      \item $f^1(u^1,\nabla u^1)\leq f^2(u^1,\nabla u^1),\ dt\otimes dx\otimes dP-a.e.$;
      \item $l^1(u)\leq l^2(u),\ dt\otimes dx\otimes dP-a.e.$;
      \item $S^1\leq S^2,\ dt\otimes dx\otimes dP-a.e.$.
    \end{enumerate}
Then, for almost all $\omega\in\Omega$, $u^1(t,x)\leq u^2(t,x),\
q.e..$\end{theorem}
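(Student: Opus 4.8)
The plan is to estimate the positive part of $w:=u^1-u^2$ and to show $E\sup_{t\in[0,T]}\|(u^1_t-u^2_t)^+\|^2=0$, from which $u^1\le u^2$ follows $dP\otimes dt\otimes dx$-a.e. and then q.e. by quasi-continuity. The tool is the It\^o formula for the difference of two solutions (the version announced before Theorem \ref{maintheo}, obtained as in Theorem \ref{Itoformula}), applied to $\Phi_\epsilon(w)$ for a smooth convex $\Phi_\epsilon$ approximating $x\mapsto((x)^+)^2$ and chosen so that $\Phi_\epsilon'\ge 0$ with $\Phi_\epsilon'(x)=0$ for $x\le 0$. First I would write the equation for $w$, whose drift carries $f^1(u^1,\nabla u^1)-f^2(u^2,\nabla u^2)$ and $\partial_i(g_i(u^1,\nabla u^1)-g_i(u^2,\nabla u^2))$, whose boundary datum is $l^1(u^1)-l^2(u^2)$, whose martingale part is built from $h_j(u^1,\nabla u^1)-h_j(u^2,\nabla u^2)$, and whose reflecting measure is $\nu^1-\nu^2$.

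For the drift and boundary terms I would exploit the monotonicity hypotheses. Writing
\[
f^1(u^1,\nabla u^1)-f^2(u^2,\nabla u^2)=\big(f^1(u^1,\nabla u^1)-f^2(u^1,\nabla u^1)\big)+\big(f^2(u^1,\nabla u^1)-f^2(u^2,\nabla u^2)\big),
\]
the first bracket is $\le 0$ by hypothesis 2 and is paired with $\Phi_\epsilon'(w)\ge 0$, so it contributes non-positively; the second is Lipschitz in $(u,\nabla u)$ and bounded by $\|(w)^+\|$ plus a gradient term. The same device applied to $l^1(u^1)-l^2(u^2)$ via hypothesis 3 isolates a non-positive boundary contribution, the remainder being estimated through the continuity of the trace operator exactly as in Lemma \ref{estimoftau}. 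The gradient contributions from $g$, $h$ and the Lipschitz remainders are then absorbed into the coercive term $\int_0^t\mathcal{E}((u^1_s-u^2_s)^+)\,ds$ thanks to the ellipticity constant $\lambda$ and the contraction condition $2\alpha+\beta^2+2\|Tr\|^2\theta<2\lambda$ of Assumption (H).

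The decisive point is the measure term $\int_0^t\int_{\bar{\mathcal O}}\Phi_\epsilon'(\tilde u^1_s-\tilde u^2_s)\,(\nu^1-\nu^2)(dx\,ds)$, which I claim is non-positive. Since $\Phi_\epsilon'\ge 0$ and $\nu^2\ge 0$, the part carried by $\nu^2$ is $\le 0$. For the part carried by $\nu^1$ I would use the minimal Skorohod condition (item 4 of the definition of solution): $\nu^1$ is supported on $\{\tilde u^1=S^1\}$, and there $S^1\le S^2\le\tilde u^2$ by hypothesis 4 together with $\tilde u^2\ge S^2$; hence $\tilde u^1-\tilde u^2\le 0$ and $\Phi_\epsilon'(\tilde u^1-\tilde u^2)=0$, so this integral vanishes. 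This is precisely where quasi-continuous versions are indispensable, the regular measures not charging polar sets, so that the pointwise inequalities hold $\nu^i$-a.e.

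The initial contribution $\|(\xi^1-\xi^2)^+\|^2$ vanishes by hypothesis 1. Taking expectations, controlling the stochastic integral by the Burkholder--Davis--Gundy inequality, and keeping only the non-positive terms identified above, I would arrive, after letting $\epsilon\to 0$ so that $\Phi_\epsilon(w)\to((w)^+)^2$, at an inequality of the form $E\sup_{s\le t}\|(u^1_s-u^2_s)^+\|^2\le C\int_0^t E\|(u^1_s-u^2_s)^+\|^2\,ds$; Gronwall's lemma then forces $(u^1-u^2)^+=0$. I expect the main obstacle to be the rigorous treatment of the measure term under the approximation: one must preserve the inclusion $\{\Phi_\epsilon'(\tilde u^1-\tilde u^2)>0\}\subset\{\tilde u^1>\tilde u^2\}$ as $\epsilon\to 0$ and pass to the limit in the integrals against $\nu^1$ and $\nu^2$ by means of the convergence Lemma \ref{convergemeas}, working throughout with quasi-continuous representatives.
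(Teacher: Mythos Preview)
Your proposal is correct and follows essentially the same route the paper indicates, namely the argument of Theorem~8 in \cite{DMZ12} adapted to the Neumann setting via Theorem~\ref{Itoformula}; the only new ingredient is the boundary term, which you handle correctly through the trace estimate and the strengthened contraction condition. One minor point: your worry about invoking Lemma~\ref{convergemeas} for the measure term is unnecessary, since with the standard choice $\Phi_\epsilon'(x)=0$ for $x\le 0$ the integral against $\nu^1$ already vanishes and the integral against $\nu^2$ is already nonpositive at every fixed $\epsilon$, so no limit passage in the measures is needed.
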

\begin{remark}Applying the comparison theorem to the same obstacle  gives another proof of  the uniqueness of the solution.\end{remark}

%


\begin{thebibliography}{99}
\addcontentsline{toc}{chapter}{Bibliographie}
\bibitem{Ada} Adams R.A.: {Sobolev spaces}. \textit{Academic Press}, 1975

 \bibitem{Aronson}  Aronson D.G.: On the Green's function for second order parabolic
differential equations with discontinuous coefficients. \textit{ Bulletin of
the American Mathematical Society}, {\bf 69},  841-847 (1963).

\bibitem{Aronson3}  Aronson, D.G.: Non-negative solutions
of linear parabolic equations, Annali della Scuola Normale
Superiore di Pisa, Classe di Scienze 3, {\bf tome 22} (4), 607-694  (1968).


\bibitem{BCEF}  Bally V.,  Caballero E., El-Karoui N. and
Fernandez, B.: Reflected BSDE's PDE's and Variational Inequalities. \textit{preprint INRIA report}, (2004).
%

\bibitem{BensoussanLions78} Bensoussan  A. and Lions J.-L.: Applications des Inéquations variationnelles en contrôle
stochastique. \textit{Dunod}, Paris (1978).


\bibitem{CT75}  Charrier P. and Troianiello G.M.
Un r\'esultat d'existence et de r\'egularit\'e pour les
solutions fortes d'un probl\`eme unilat\'eral d'\'evolution avec
obstacle d\'ependant du temps. \textit{ C.R.Acad. Sc. Paris}, {\bf 281}, s\'erie A,
621 (1975).

\bibitem{DaPZ}  Da Prato G. and Zabczyk J. :{Stochastic equations in infinite dimensions}. \textit{Cambridge University Press}, (1992).

\bibitem{DMZ}  Dalang R.C., Mueller C.,  and Zambotti L.:
Hitting properties of parabolic s.p.d.e.'s with reflection.
\textit{The Annals of Probability}, {\bf34}, No.4, 1423-1450 (2006).

\bibitem{Denis} Denis L.: Solutions of SPDE considered as Dirichlet
Processes. \textit{Bernoulli Journal of Probability}, {\bf 10} (5), 783-827 (2004).

\bibitem{DenisStoica}  Denis L. and Sto\"{\i}ca L.:  A general analytical
result for non-linear s.p.d.e.'s and applications. 
\textit{Electronic Journal of Probability}, {\bf 9}, 674-709 (2004).

%
%

\bibitem{DMZ12}  Denis L., Matoussi A. and Zhang J.: The obstacle problem for quasilinear stochastic PDEs: Analytical approach.
\textit{The Annals of Probability}, {\bf 42}, 865--905 (2014).

\bibitem{DonatiPardoux} Donati-Martin C. and Pardoux
E.: White noise driven SPDEs with reflection. \textit{Probability
Theory and Related Fields}, {\bf 95}, 1-24 (1993).

\bibitem{EKPPQ}  El Karoui N., Kapoudjian C., Pardoux E., Peng S., and Quenez M.C.: Reflected Solutions of
Backward SDE and Related Obstacle Problems for PDEs. \textit{The Annals of
Probability}, {\bf 25} (2), 702-737 (1997).

\bibitem{GT} Gilbarg, D. and Trudinger, N. S.:{Elliptic partial differential equations of second order}. \textit{Classics in mathematics, Springer}, (2001).


\bibitem{Klimsiak} Klimsiak T.: Reflected BSDEs and
obstacle problem for semilinear PDEs in divergence form. \textit{Stochastic
Processes and their Applications},  {\bf 122} (1), 134-169
(2012).

\bibitem{Lieb} Lieberman G. M.: Second Order Parabolic Differential Equations.\textit{World Scientific}, (1996).

\bibitem{LionsMagenes} Lions J.L. and Magenes
E.: Probl\`emes aux limites non homog\`enes et applications.
\textit{Dunod, Paris}, (1968).


\bibitem{MatoussiStoica} Matoussi A. and Sto\"{\i}ca
L.: The Obstacle Problem for Quasilinear Stochastic PDE's.
\textit{The Annals of Probability}, {\bf 38}, No.3, 1143-1179 (2010).

\bibitem{MignotPuel} Mignot F. and Puel J.P.:  In\'equations d'\'evolution paraboliques avec convexes d\'ependant
du temps. Applications aux in\'equations quasi-variationnelles
d'\'evolution. \textit{Arch. for Rat. Mech. and Ana.},  {\bf{64}}, No.1,  59-91 (1977).

\bibitem{NualartPardoux} Nualart D. and Pardoux
E.: White noise driven quasilinear SPDEs with reflection.
\textit{Probability Theory and Related Fields},   {\bf 93}, 77-89 (1992).

\bibitem{Pierre} Pierre M.: Probl\`emes d'Evolution avec Contraintes Unilaterales et
Potentiels Parabolique. \textit{Comm. in Partial Differential Equations},
{\bf 4} (10), 1149-1197 (1979).

\bibitem{PIERRE} Pierre M.: Repr\'esentant Pr\'ecis d'Un Potentiel Parabolique. \textit{S\'eminaire
de Th\'eorie du Potentiel, Paris}, {\bf No.5}, Lecture Notes in Math. 814, 186-228 (1980).


\bibitem{RiezNagy}  Riesz, F.  and  Nagy, B.: Functional Analysis.  \textit{Dover, New York}, 1990.


\bibitem{SW} Sanz M. and Vuillermot P.: Equivalence and Hölder Sobolev regularity of solutions for a class of non-autonomous stochastic partial differential equations. \textit{Ann. I. H. Poincar\'e}, {\bf{39}} (4), 703-742, (2003).


\bibitem{XuZhang} Xu T.G. and Zhang
T.S.: White noise driven SPDEs with reflection: Existence,
uniqueness and large deviation principles.
\textit{Stochatic processes and their applications}, {\bf 119}, 3453-3470 (2009).

\bibitem{Zhang} Zhang T.S.:
White noise driven SPDEs with reflection: Strong Feller properties and Harnack inequalities.
\textit{Potential Analysis}, {\bf 33} (2),  137-151 (2010).



\end{thebibliography}
\end{document}